\newtheorem{theorem}{Theorem}[section]
\newtheorem{lemma}[theorem]{Lemma}
\def\bb #1{ {\mathbb #1} }
\def\c #1{ {\mathcal #1} }
\begin{document}
\title{Hyperkloosterman Sums Revisited}

\author{Alan Adolphson \and Steven Sperber}

\date{\today}
\maketitle
\begin{abstract}
We return to some past studies of hyperkloosterman sums (\cite{S1,S2}) via $p$-adic cohomology with an aim to improve earlier results. In particular, we work here with Dwork's $\theta_{\infty}$-splitting function and a better choice of basis for cohomology. To a large extent,we are guided to this choice of basis by our recent work on the $p$-integrality of coefficients of $A$-hypergeometric series\cite{AS2}. In the earlier work, congruence estimates were limited to $p>n+2$. We are here able to remove all characteristic restrictions from earlier results.  
\end{abstract}

\section{Hyperkloosterman Sums}\label{S: Intro}

Let $\bb F_q$ be a finite field of characteristic $p$, with $q = p^a$. Let
\[ K^{(n)}( \Lambda, x) := x_1 + \cdots + x_n + \frac{ \Lambda}{x_1 \cdots x_n} \in \bb F_q[\Lambda, x_1^{\pm 1},\dots,x_n^{\pm 1}]. \]
Let $\Theta_q$ be a non-trivial additive character of  $ {\bb F}_q $ .   We will in general start with a non-trivial additive character $\Theta \ \text{of} \ \mathbb{F}_p$ and then $\Theta_q$ will be taken $\Theta_q = \Theta \circ \text{Tr}_{\mathbb{F}_q/\mathbb{F}_p}$.
For any $\bar \lambda \in \overline{\bb F}_q^*$, the hyperkloos\-terman sum (over $\bb F_q(\bar \lambda)$) is given by
\[ {\rm Kl}_{n+1}(\bar \lambda) := \sum_{\bar x \in (\bb F_q(\bar \lambda)^*)^n} \Theta_{\bar \lambda}( K^{(n)}(\bar \lambda, \bar x)) \]
where $\Theta_{\bar \lambda}$ (= $ \Theta \circ {\rm Tr}_{\bb F_q(\bar \lambda)/{\bb F}_p} $) is a non-trivial additive character of $\bb F_q(\bar \lambda)$. 

As usual, we define an appropriate  weight function.  For any $\bar \lambda \in \overline{\bb F}_q^*$, let $K_{n,\bar \lambda}(x) :=K^{(n)}(\bar \lambda, x) \in \overline{\bb F}_q(\bar \lambda)[x_1^{\pm 1},\dots,x_n^{\pm 1}]$. Let $ \{e_i \}_{i=1}^n $ be the standard basis in $\bb Z^n$ and let $U = -\sum_{i=1}^n e_i = (-1, -1, \dots,-1) \in \bb Z^n.$ Set $\Delta_{\infty}(K_{n,\bar \lambda})$ equal to the convex closure in $\bb R^n$ of $\{e_i\}_{i=1}^n \cup \{U \}.$ Then $\Delta_{\infty}(K_{n, \bar \lambda}) $ is an $n$-dimensional polyhedron with the origin as an interior point and having $n+1$ facets $ \{ \sigma_i \}_{i=0}^n,$ where $\sigma_i$ is the convex closure of $\{e_j \}_{j \neq i} \cup \{ U \} $ for $1 \leq i \leq n$ and $\sigma_0$ is the convex closure of $\{e_j \}_{j=1}^n.$ Let
\begin{align}
\ell^{(i)}(x) &= \sum_{j \neq i}x_j - n x_i, \ \text{for} \ 1 \leq i \leq n,\\
\ell^{(0)}(x) &= \sum_{j=1}^n x_j .
\end{align}

Then $\sigma_i$ spans the affine hyperplane with equation 
\[ \ell^{(i)}(x) = 1 \]
in $\bb R^n$. If we set
\[ m(u) = \max \{0, -u_1,\dots,-u_n \}, \] 
then the polyhedral weight defined on $\bb Z^n$ is given as usual by
\[ w(u) = \max \{ \ell^{(i)}(u) \}_{i=0}^n = \sum_{i=1}^n u_i + (n+1) m(u) \]
for $u \in \bb Z^n$. We may also utilize the relationship $U + \sum_{j=1}^n e_j={\bf 0}$ to express an arbitrary $u \in \bb Z^n$ uniquely in the form 
\[ u =  \nu_0U + \sum_{j=1}^n \nu_je_j \]
with all coefficients $\nu_i$ non-negative integers and at least one coefficient equal to zero. In this representation $w(u) = \sum_{j=0}^n \nu_j$.

It will simplify notation if we assume in the following $\bar \lambda \in \bb F_q^{*}$; in the case $\bar \lambda \in \overline{\bb F}_q^{*}$ we simply extend coefficients to the fields $\bb F_q(\bar \lambda)$ and $\Omega_0(\lambda)$ (defined below) and proceed in the same fashion.  Fix $\zeta_p$ a primitive $p$-th root of unity. Let $\bb Q_q$ be the unramified extension of $\bb Q_p$ of degree $a = [\bb F_q: \bb F_p]$ and let $\bb Z_q$ be its ring of integers. Then $\bb Z_q[\zeta_p]=\mathcal{O}_0$ and $\bb Z_p[\zeta_p]=\mathcal{O}_1$ are the ring of integers of $\Omega_0(:=\bb Q_q(\zeta_p))$ and $\Omega_1 (:=\bb Q_p(\zeta_p))$, respectively.  Let $\lambda$ be the Teichm\"uller lift  in $\Omega_0$ of $\bar \lambda$.  We consider formal (``doubly-infinite") Laurent series with coefficients in $\mathcal{O}_0$
\[ \c B_0 := \left\{ \xi =\sum_{u \in \bb Z^n} \xi(u)\gamma^{w(u)}x^u \mid \xi(u) \in \c O_0 \right\}, \]
where $\gamma$ is an element of $\c O_1$ having $\text{ord} \ \gamma = \ \frac{1}{p-1}$ and to be precisely specified below. We also define an $\c O_0$-subalgebra $\c C_0 \ \text{of} \ \c B_0$,
\[ \c C_0 := \left\{ \xi =\sum_{u \in \bb Z^n} \xi(u)\gamma^{w(u)}x^u \mid \xi(u) \in \c O_0, \xi(u) \rightarrow 0 \text{ as } u \rightarrow \infty \right\}. \]
We endow both $\c B_0 \ \text{and} \ \c C_0$ with a valuation via 
\[ |\xi| = \sup_ {u \in \bb Z^n} \{ |\xi(u)| \}. \]

Our goal at first is to remove the characteristic restrictions in the result \cite{S2} on the precise Newton polygon for the eigenvalues of Frobenius for the hyperkloosterman sums, the only substantial change from the earlier work being a more convenient choice of basis for cohomology. Recall first the basic properties of the polyhedral weight function $w: \bb Z^n \rightarrow \bb Z_{\geq 0}$.
\begin{gather}
w(u) = 0 \iff u=0 \\
w(cu) = c w(u), \;\forall c \in \bb Z_{\geq 0},\; u \in \bb Z^n,\\
w(u + v) \leq w(u) + w(v) 
\end{gather}
with equality holding in the last property if and only if $u \ \text{and} \ v$ belong to the same closed sectorial cone in $\bb R^n$. Let ${\rm AH}(t)$ denote the Artin-Hasse series
\[ {\rm AH}(t) = \exp\bigg(\sum_{j=0}^{\infty} \frac{t^{p^j}}{p^j}\bigg). \]
We now fix  $\gamma$ to be a zero of $\sum_{j=0}^{\infty} \frac{t^{p^j}}{p^j}$ having $\text{ord} (\gamma) = 1/(p-1)$, and let
\[ \theta_{\infty}(t) = {\rm AH}(\gamma t) = \sum_{j=0}^{\infty} b_j t^j. \]
As is well-known, $b_j = \frac{\gamma^j}{j!}$ for $0 \leq j \leq p-1$, so that $\text{ord} \ b_j = \frac{j}{p-1}, \ \text{for} \ 0 \leq j \leq p-1$.  In general
${\rm ord}\:b_j \geq \frac{j}{p-1}$.  Then $\theta_{\infty}(t) $ is a splitting function in Dwork's terminology\cite{D}.  Let
\begin{equation}
 F_{\lambda}(x) = \theta_{\infty}\bigg(\frac{\lambda}{x_1 x_2\cdots x_n}\bigg) \prod_{j=1}^n \theta_{\infty}(x_j) = \sum_{u \in \bb Z^n} B(u) x^u.    
\end{equation}
The coefficients $B(u) = \sum b_{m_1}b_{m_2}\cdots b_{m_n}b_{\ell} \lambda^{\ell}$ with the sum running over all  $(\underline{m},\ell):=(m_1\dots,m_n,\ell) \in \mathcal{I}(u) \subseteq \bb Z_{\geq 0}^{n+1}$, where
\[ \mathcal{I}(u) = \{ (\underline{m},\ell) \in \mathbb{Z}^{n+1}_{\geq 0}\mid m_i - \ell = u_i, \ \text{for} \ 1 \leq i \leq n \}. \]
Since $m_i -\ell = u_i$ for all $i$, we have $\ell \geq m(u)$ for all $(\underline{m}, \ell) \in \mathcal{I}(u)$.  So  
\begin{align}
{\rm ord}\:B(u) &\geq \inf_{(\underline{m},\ell) \in \mathcal{I}(u)} \bigg\{\frac{m_1 +\cdots+m_n + \ell}{p-1} \bigg\} \nonumber \\ 
&\geq \inf_{\ell \geq m(u)} \bigg\{ \frac{\sum_{i=1}^n u_i + (n+1)\ell}{p-1} \bigg\} \nonumber \\
&\geq \frac{w(u)}{p-1}.                  
\end{align}
In particular, 
\begin{equation}
{\rm ord}\: b_{m_1}b_{m_2}\cdots b_{m_n}b_{\ell}\lambda^{\ell} > \frac{w(u)}{p-1} \ \text{if} \  \ell > w(u).                               \end{equation}

Let $\psi$ act on monomials by 
\begin{equation*} 
\psi(x^u) = \begin{cases} x^{u/p} & \text{if $p$ divides $u_i$ for all $i$,} \\  
0 & \text{otherwise.} \end{cases} 
\end{equation*}
We extend the action of $\psi$ ``linearly" to series in the monomials $x^u$. 
Let $\sigma$ be the Frobenius generator of $\text{Gal}(\bb Q_q/\bb Q_p)$ and let us extend $\sigma$ to $\text{Gal}(\Omega_0/\Omega_1)$ by setting $\sigma(\zeta_p) = \zeta_p$.  Define a semi-linear (over $\Omega_0$) operator $\alpha_1$  by
\[ \alpha_1 = \sigma^{-1} \circ \psi \circ F_{\lambda}(x). \]
Let $\alpha_0 = \alpha_1^a$.  Then $\alpha_0$ is a completely continuous operator, linear over $\Omega_0$, on the 
$p$-adic Banach space $\c C_0$.  As such, $\alpha_0$ has a $p$-adically entire Fredholm determinant, $\det(I-T\alpha_0)$.  Let $\delta$ act on series via $P(T)^{\delta} = \frac{P(T)}{P(qT)}$.  It follows from the Dwork trace formula that
\[ L(K_{n, \bar \lambda}/\bb F_q, \Theta, T) = \text{det}(I-T\alpha_0)^{(-1)^{n+1}\delta^n}. \]
Following Dwork, we construct below a complex of Banach $\c O_0$-algebras, $(\Omega^{\bullet}_{\c C_0}, \nabla(D))$, and obtain a trace formula here as well.  It is useful first to obtain the following expression for $F_{\lambda}(x)$ in (6) above. 

Set $\gamma_0 = \gamma$, and for $i \geq 1$, 
\begin{align*}
\gamma_i &=  \sum_{j=0}^i \gamma^{p^j} / p^j \\
&= -  \sum_{j=i+1}^\infty \gamma^{p^j} / p^j.
\end{align*}
From this second description, we see that
\[ {\rm ord}\:\bigg(\frac{\gamma_i}{\gamma}\bigg) = \frac{p^{i+1}-1}{p-1} - (i+1) \]
for every $i \geq 0$. 

Let $K_{n}(\lambda, x) \in \bb Z_q[\lambda][ x_1^\pm, \ldots, x_n^\pm]$ be the lifting of $ K_{n}(\bar \lambda, x)$ using the Teichm\"uller lift $\lambda$ of $\bar \lambda$.  We will at times abbreviate notation and write $K(x)$ for $K_{n}(\lambda, x)$ when the context is clear.  Since $\sigma(\lambda) = \lambda^p$, we write
\[ H(x) = \sum_{i=0}^\infty \gamma_i K^{\sigma^i}( x^{p^i}) \]
where $\sigma$ acts on coefficients.  Then 
\[ F_{\lambda}(x) =  \exp(H(x))/ \text{exp}(H(x^p)), \]
and formally 
\begin{align*}
\alpha_1 &= \frac{1}{\exp(H(x))} \circ \sigma^{-1} \circ \psi \circ \exp(H(x)), \\
\alpha_0 &= \frac{1}{\exp(H(x))} \circ  \psi^a \circ \exp(H(x)).
\end{align*}

This motivates the following definition.  We define 
\[ D_l = E_l + \gamma H_l \]
with  $H_l = x_l \frac{\partial H(x)}{\partial x_l}$ for $1 \leq l \leq n$.  Then $x_l \frac{\partial H(x)}{\partial x_l} = \sum_{i=0}^\infty \frac{p^i \gamma_i}{\gamma}  x_l \frac{\partial K^{\sigma^i}( x^{p^i})}{\partial x_l}$, and  $\{D_l\}_{l=1}^n$ is a commuting set of operators on $\c C_0$ satisfying $pD_l \circ \alpha_1 = \alpha_1 \circ D_l$ and $qD_l \circ \alpha_0 = \alpha_0 \circ D_l$.
We define a complex $(\Omega^{\bullet}_{\c C_0}, \nabla(D))$ of Banach spaces as follows. Let 
\[ \Omega^i_{\c C_0} =  \bigoplus_{1\leq j_1 < j_2 < \cdots < j_i \leq n} \c C_0 \,\frac{dx_{j_1}}{x_{j_1}} \wedge \cdots \wedge \frac{dx_{j_i}}{x_{j_i}} \]
with boundary  map
\[ \nabla(D)\bigg(\xi\, \frac{dx_{j_1}}{x_{j_1}} \wedge \cdots \wedge \frac{dx_{j_i}}{x_{j_i}}\bigg) = \left( \sum_{l=1}^n D_l(\xi)\, \frac{dx_l}{x_l} \right) \wedge \frac{dx_{j_1}}{x_{j_1}} \wedge \cdots \wedge \frac{dx_{j_i}}{x_{j_i}}. \]
Set 
\begin{align*}
\text{Frob}_1^{(i)} \bigg(\xi \wedge \frac{dx_{j_1}}{x_{j_1}} \wedge \cdots \wedge \frac{dx_{j_i}}{x_{j_i}}\bigg) &= p^{n-i}\alpha_1(\xi)\wedge \frac{dx_{j_1}}{x_{j_1}} \wedge \cdots \wedge \frac{dx_{j_i}}{x_{j_i}}, \\
\text{Frob}_0^{(i)} \bigg(\xi \wedge \frac{dx_{j_1}}{x_{j_1}} \wedge \cdots \wedge \frac{dx_{j_i}}{x_{j_i}}\bigg) &= q^{n-i} \alpha_0(\xi) \wedge \frac{dx_{j_1}}{x_{j_1}} \wedge \cdots \wedge \frac{dx_{j_i}}{x_{j_i}}.
\end{align*}
Then $\text{Frob}_0^{\bullet}$ (respectively $\text{Frob}_1^{\bullet}$) is a chain map on $(\Omega^{\bullet}_{\c C_0}, \nabla(D))$ composed of completely continuous operators (respectively a chain map of semi-linear operators). The trace formula in this setting becomes
\[ L(K_{n, \bar \lambda}/\bb F_q, \Theta, T)^{(-1)^{n+1}} = \prod_{i=0}^n \text{det}(I-T \text{Frob}^{(i)}_0|\Omega_{\c C_0}^i)^{(-1)^{n+i}}. \]
It is well-known that the maps $H^i\text{(Frob}^{\bullet}_0)$ are nuclear so that one has as well a cohomological trace formula
\[ L(K_{n, \bar \lambda}/\bb F_q, \Theta, T)^{(-1)^{n+1}} = \prod_{i=0}^n \text{det}(I-TH^i( \text{Frob}_0^\bullet)|H^i(\Omega^{\bullet}_{\c C_0}))^{(-1)^{n+i}}, \] 
where each factor on the right is $p$-adically entire.

The ring $\bb F_q[x_1^{\pm},\dots,x_n^{\pm}]$ is endowed with an increasing filtration defined by weight: 
\begin{multline*}
 \text{Fil}^i \ \bb F_q[x_1^{\pm},\dots,x_n^{\pm}]= \{\bar \xi = \sum \overline{\xi(u)} x^u \in \bb F_q[x_1^{\pm},\dots,x_n^{\pm}] \mid \\  \text{$w(u) \leq i$ for all $u \in {\rm support}(\bar \xi)$}\}. 
\end{multline*}
Let $\bar S = \text{gr} \: \bb F_q[x_1^{\pm},\dots,x_n^{\pm}]$ be the associated graded ring:
\[ \bar S^i = \text{Fil}^i\bb F_q[x_1^{\pm},\dots,x_n^{\pm}]/\text{Fil}^{i-1}\bb F_q[x_1^{\pm},\dots,x_n^{\pm}]. \]
Given $\xi = \sum_{u \in \bb Z^n} \xi(u) \gamma^{w(u)}x^u \in \c C_0$,  we define its reduction mod $\gamma$ to be the Laurent polynomial $\bar \xi = \sum_{u \in \bb Z^n} \overline{\xi(u)} x^u \in \bar S$.  The reduction map, ${\rm Pr}: \c C_0 \rightarrow \bar S$ is a ring homomorphism giving the isomorphism ${\rm Pr}: \c C_0/\gamma \c C_0 \rightarrow \bar S$.  Of course, $\bar S$ is also filtered by weight with 
\[ \text{Fil}^i \bar S = \{ \bar \xi \in \bar S \mid \text{$w(u) \leq i$ for all $u \in\text{support}(\bar \xi)$}\}. \]
Note that multiplication in $\bar S$ satisfies
\begin{equation*}
x^u \times x^v = \begin{cases} x^{u+v} & \text{if $u$ and $v$ belong to the same closed sectorial cone,} \\
  0 & \text{otherwise.} \end{cases}
\end{equation*}

The following two complexes $(\Omega_{\bar S}^\bullet, \nabla(\bar H))$ and $(\Omega_{\bar S}^\bullet, \nabla(\bar D))$ in characteristic~$p$ will play a role in the present study.  In either complex, the terms are given by 
\[ \Omega_{\bar S}^i =  \bigoplus_{1\leq j_1 < j_2 < \cdots < j_i \leq n} \bar S \,\frac{dx_{j_1}}{x_{j_1}} \wedge \cdots \wedge \frac{dx_{j_i}}{x_{j_i}}. \]
Note that the characteristic zero Laurent polynomial $K^{\sigma^i}( x^{p^i})$ has weight less than or equal to $p^i$ for all $i \geq 0$.  For $i \geq 1$, 
\[ {\rm ord}\:p^i \gamma_i \geq \frac{p^{i+1}}{p-1} -1 = \frac{p^i}{p-1} + (p^i - 1) > \frac{p^i}{p-1}, \] 
so that the reduction of 
$ \gamma x_l \frac{\partial H(x)}{\partial x_l}$ is $x_l - \frac{\bar \lambda}{x_1\cdots x_n} (=:\bar H_l) \in \bar S^1$. Similarly the reduction of the operator $\bar D_l$ is given by  $x_l \frac{\partial}{\partial x_l} + \bar H_l$. 
The boundary operator $\nabla(\bar H)$ is defined then by 
\[ \nabla(\bar H)\bigg(\bar \xi \,\frac{dx_{j_1}}{x_{j_1}} \wedge \cdots \wedge \frac{dx_{j_i}}{x_{j_i}}\bigg) = \left( \sum_{l=1}^n \bar H_l(\xi)\, \frac{dx_l}{x_l} \right) \wedge \frac{dx_{j_1}}{x_{j_1}} \wedge \cdots \wedge \frac{dx_{j_i}}{x_{j_i}}, \]
with the analogous formula in the case of $\nabla(\bar D)$.
\begin{theorem}
The complex $(\Omega_{\bar S}^\bullet, \nabla(\bar H))$ is acyclic except in top degree $n$.  Let $\epsilon_i = x_1x_2\cdots x_i$ for $0 \leq i \leq n$, with $\epsilon_0 =1$.  Then $H^n(\Omega^\bullet_{\bar S}, \nabla(\bar H))$ is an $(n+1)$-dimensional $\bb F_q$-algebra, with basis $\c B = \{\epsilon_i \}_{i=0}^n$.  It is convenient to adopt the convention $\epsilon_i = 0$ for $i>n$.  For any integer $i$ and any $u \in \bb Z^n$ with $w(u)=i$ there are elements $\{\bar{\xi}_j(u)\}_{j=1}^n \in \bar S^{i-1}$ and $\bar{a}_i(u) \in \bb F_q$ such that 
\[ x^u = \bar a_i(u)\epsilon_i + \sum_{j=1}^n \bar{H}_j\xi_j(u). \]
\end{theorem}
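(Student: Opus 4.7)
The plan is to interpret $(\Omega^\bullet_{\bar S}, \nabla(\bar H))$ as the Koszul complex of the commuting multiplication operators $\bar H_1, \ldots, \bar H_n \in \bar S^1$ acting on the weight-graded $\bb F_q$-algebra $\bar S$, so that $H^n(\Omega^\bullet_{\bar S}, \nabla(\bar H)) = \bar S/\sum_{j=1}^n \bar H_j \bar S$. Since $\nabla(\bar H)$ raises weight by one, the entire complex decomposes into finite-dimensional subcomplexes indexed by $k := (\text{weight of coefficient}) - (\text{cohomological degree})$.

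The first task is the explicit reduction $x^u \equiv \bar a_i(u)\epsilon_i \pmod{\sum_j \bar H_j \bar S}$. Two identities drive it. First, $\bar H_l - \bar H_k = x_l - x_k$ gives the \emph{swap relation}: for any $v \in C_0$ and any $l, k \in \{1, \ldots, n\}$, $x^{v+e_l} \equiv x^{v+e_k} \pmod{\sum_j \bar H_j \bar S}$, because $v, e_l, e_k$ all lie in $C_0$ so the sectorial products coincide with the ordinary ones. Second, for $u \in C_l$ with $l \geq 1$ written uniquely as $u = \nu_0 U + \sum_{j \neq l}\nu_j e_j$ with $\nu_0, \nu_j \geq 0$, I would prove by induction on $\nu_0$ that $x^u \equiv \bar\lambda^{-\nu_0} x^v \pmod{\sum_j \bar H_j \bar S}$, where $v := \nu_0 e_l + \sum_{j \neq l}\nu_j e_j \in C_0$ has $w(v)=i$. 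The inductive step ($\nu_0 \geq 1$) uses that $u - U$ still lies in $C_l$, so $U$ and $u-U$ share the common cone $C_l$ and $x^U \cdot x^{u-U} = x^u$ in $\bar S$; the identity $\bar H_l \cdot x^{u-U} = x_l x^{u-U} - \bar\lambda x^u$ then yields $\bar\lambda x^u \equiv x_l x^{u-U}$ modulo $\bar H_l \bar S$, and the inductive hypothesis combined with the $C_0$-product $x_l \cdot x^{v-e_l} = x^v$ closes the step. Iterating the swap relation reduces $v \in C_0$ of weight $i$ to $\epsilon_i$ when $i \leq n$ and to $x^{(1,\ldots,1,i-n+1)}$ when $i > n$; in the latter case $v - e_n$ lies in the strict interior of $C_0$, so $x^U \cdot x^{v-e_n} = 0$ in $\bar S$ and $\bar H_n \cdot x^{v-e_n} = x^v$ forces $x^v \equiv 0$. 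The upshot is $\bar a_i(u) = \bar\lambda^{-\nu_0}$ for $i \leq n$ and $\bar a_i(u) = 0$ for $i > n$, proving the final formula of the theorem and showing that $\{\epsilon_i\}_{i=0}^n$ spans $H^n$.

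The remaining claims---linear independence of $\{\epsilon_i\}$ in $H^n$ and vanishing of $H^j$ for $j < n$---I would handle by appealing to the fact that $\bar S$ is the toric face ring of the complete simplicial fan of $\bb P^n$, hence Cohen--Macaulay of Krull dimension $n$. Since the preceding reduction shows $\bar S/\sum_j \bar H_j \bar S$ is finite-dimensional, $(\bar H_1, \ldots, \bar H_n)$ is a system of parameters in $\bar S$ and hence a regular sequence by Cohen--Macaulayness; therefore the Koszul complex is a free resolution of $\bar S/\sum_j \bar H_j \bar S$ concentrated in top cohomology. A weight-graded Euler-characteristic count, using an Ehrhart-type computation on $\Delta_\infty$ to verify that $\sum_{j=0}^n (-1)^j \binom{n}{j} N(j+k) = (-1)^n$ exactly for $-n \leq k \leq 0$ (where $N(i) = \dim_{\bb F_q} \bar S^i$), independently confirms the dimensions. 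The main obstacle is this final Cohen--Macaulay step: the explicit reduction is essentially mechanical once the two identities are in hand, but passing from a spanning set to a basis in top cohomology, and establishing vanishing in lower degrees, genuinely requires commutative-algebra input on the structure of $\bar S$ itself.
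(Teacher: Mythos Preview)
Your proposal is correct, and at the conceptual level it coincides with what the paper invokes: the paper's two-sentence proof simply cites ``nondegeneracy of $K_{n,\bar\lambda}$ with respect to $\Delta_\infty$'', which in the Adolphson--Sperber/Kouchnirenko framework unpacks to exactly your argument---$\bar S$ is Cohen--Macaulay, the $\bar H_j$ form a system of parameters (by finite-dimensionality of the quotient), hence a regular sequence, hence the Koszul complex is acyclic below degree $n$.

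Where you go further is in the explicit reduction step. The paper asserts the existence of $\bar a_i(u)$ and $\bar\xi_j(u)$ without computing them; you produce the closed formula $\bar a_i(u)=\bar\lambda^{-m(u)}$ for $i\le n$ and $0$ for $i>n$ via the swap relation $x^{v+e_l}\equiv x^{v+e_k}$ and the cone-transfer identity $\bar\lambda x^u\equiv x_l x^{u-U}$. This is a genuine addition. One small remark that would streamline your write-up: the identification $\bar S\cong \bb F_q[y_0,\ldots,y_n]/(y_0\cdots y_n)$ (via $y_0\mapsto x^U$, $y_i\mapsto x_i$) makes the Cohen--Macaulay claim immediate---it is a hypersurface---and under this isomorphism $\bar H_l=y_l-\bar\lambda y_0$, so the quotient is visibly $\bb F_q[y_0]/(y_0^{n+1})$. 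That gives both acyclicity and the dimension $n+1$ in one stroke, without the separate Ehrhart check.
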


\begin{proof}
The result is an immediate consequence of the nondegeneracy of  $K_{n,\bar \lambda}$  with respect to its Newton polyhedron $\Delta_{\infty}(K_{n, \bar \lambda})$ and the fact that the operators $\{ \bar{H}_j \}_{j=1}^n$ are homogeneous elements of weight 1 in $\bar{S}$.  This implies the vanishing of Koszul cohomology except in top degree $n$.
\end{proof}

Since $\bar{D}_l$ is given by $x_l \frac{\partial}{\partial x_l} + \bar{H}_l$ and $x_l \frac{\partial}{\partial x_l}(\bar{S}^k) \subset \bar{S}^k$ whereas $\bar{H}_l \in \bar{S}^1$, it follows by a standard argument that the complex $(\Omega_{\bar{S}}^\bullet, \nabla(\bar{D}))$ is also acyclic except in top degree $n$.  More precisely,
\begin{theorem}
For $i \neq n$, $H^i(\Omega_{\bar{S}}^\bullet, \nabla(\bar{D})) = 0$.  $H^n(\Omega_{\bar{S}}^\bullet, \nabla(\bar{D}))$ is an $(n+1)$-dimensional filtered (by weight) $\bb F_q$-algebra with basis $\c B$ as in the previous result. For every $i \in \bb Z_{\geq 0}$ and every $u \in \bb Z^n$ with $w(u) = i$, there is an element $\bar{\eta}_u \in {\rm Fil}^i(\bar{S})$ and elements $ \{ \bar{\xi}_{j,u} \}_{j=1}^n \in {\rm Fil}^{i-1}(\bar{S})$ such that
\[ x^u = \bar{\eta}_u + \sum_{j=1}^n \bar{D}_j \bar{\xi}_{j,u} \] 
with $\bar{\eta}_u = \sum_{k=0}^{\min\{i,n\}}\bar{c}_k(u)\epsilon_k$.
\end{theorem}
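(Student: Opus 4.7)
The guiding observation is that $\bar D_l = x_l\frac{\partial}{\partial x_l} + \bar H_l$ expresses $\bar D_l$ as a weight-filtered perturbation of $\bar H_l$: the term $\bar H_l$ is homogeneous of weight one in the graded ring $\bar S$ (whose multiplication follows the sectorial rule), while $x_l\frac{\partial}{\partial x_l}$ preserves weight.  Hence on the associated graded of $(\Omega^\bullet_{\bar S}, \nabla(\bar D))$ with respect to the weight filtration, the differential reduces to $\nabla(\bar H)$, whose cohomology has just been described in the preceding theorem.  This reduces everything in the statement to a bookkeeping exercise.

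The plan is to first establish the explicit decomposition $x^u = \bar\eta_u + \sum_{j=1}^n \bar D_j \bar\xi_{j,u}$ by induction on $i = w(u)$.  The base case $i=0$ gives $u = \mathbf{0}$ and $x^u = \epsilon_0$.  For $i \geq 1$, the preceding theorem supplies
\[ x^u = \bar a_i(u)\epsilon_i + \sum_{j=1}^n \bar H_j\,\bar\xi_j(u), \qquad \bar\xi_j(u) \in \bar S^{i-1}. \]
Substituting $\bar H_j = \bar D_j - x_j\frac{\partial}{\partial x_j}$ and expanding the resulting ``error'' term $-\sum_j x_j\frac{\partial \bar\xi_j(u)}{\partial x_j} \in \mathrm{Fil}^{i-1}\bar S$ as a combination of monomials, I apply the inductive hypothesis to each monomial to absorb it into basis elements of weight $\leq i-1$ together with further $\bar D_j$-coboundaries.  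Gathering terms then yields $\bar\eta_u = \sum_{k=0}^{\min(i,n)}\bar c_k(u)\epsilon_k$ and $\bar\xi_{j,u} \in \mathrm{Fil}^{i-1}\bar S$.

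Acyclicity in degrees $i < n$ follows by a parallel ``top-weight'' induction: given a closed $\omega \in \Omega^i_{\bar S}$ with top weight $k$, its weight-$k$ component $\omega^{(k)}$ is $\nabla(\bar H)$-closed, because the weight-$(k+1)$ part of $\nabla(\bar D)\omega = 0$ is exactly $\nabla(\bar H)\omega^{(k)}$ (the $x_l\frac{\partial}{\partial x_l}$ contribution lives in weight $k$, not $k+1$).  By the preceding theorem, $\omega^{(k)} = \nabla(\bar H)\tau^{(k-1)}$ for some $\tau^{(k-1)}$ of weight $k-1$, and subtracting $\nabla(\bar D)\tau^{(k-1)}$ from $\omega$ produces a closed form of strictly smaller top weight.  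The process terminates because weights are non-negative.

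For the basis claim, the decomposition above exhibits $\c B = \{\epsilon_k\}_{k=0}^n$ as a spanning set of $H^n$.  Linear independence follows from the shifted filtration $G_k\Omega^i = \{w \leq k+i\}$, which is preserved by $\nabla(\bar D)$ and whose associated graded differential is $\nabla(\bar H)$; the resulting spectral sequence has $E_1$-page concentrated in column $i = n$ with a one-dimensional contribution for each $\epsilon_k$, $0 \le k \le n$, so it collapses to give $\dim H^n = n+1$.  The main step requiring care is the termination of the inductive procedures, which is guaranteed by $w(u) \geq 0$ and the finiteness of each graded piece $\bar S^k$; no essential new mathematical content is needed beyond the preceding theorem.
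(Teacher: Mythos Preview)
Your proposal is correct and is precisely the ``standard argument'' the paper invokes in the sentence preceding the theorem: the paper gives no detailed proof, noting only that $\bar D_l = x_l\frac{\partial}{\partial x_l} + \bar H_l$ with $x_l\frac{\partial}{\partial x_l}(\bar S^k)\subset \bar S^k$ and $\bar H_l\in\bar S^1$, so the result follows from Theorem~1.1 by the usual filtered-perturbation reasoning. Your write-up supplies the explicit top-weight induction and spectral-sequence bookkeeping that this phrase is meant to summarize.
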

 
Since the complex $(\Omega_{\c C_0}^\bullet, \nabla(D))$ is a complex of complete, separated, flat $\c O_0$-algebras, the following result is immediate from the behavior above of the reduced complex (Monsky\cite{M}).  We will use the following notational convention throughout the subsequent material:
for any $u \in \mathbb{Z}^n$, write $\tilde{u} = \gamma^{w(u)} x^u$.  Using this notation, we have
\begin{theorem}
The complex $(\Omega_{\c C_0}^\bullet, \nabla(D))$ of $\c O_0$-algebras is acyclic except in top degree~$n$. $H^n(\Omega_{\c C_0}^\bullet, \nabla(D))$ is a free $\c O_0$-algebra of rank $n+1$ with normalized basis $\tilde{\c B} = \{ \tilde{\epsilon}_i \}_{i=0}^n$.  For any $\eta \in \c C_0$ there exist $\{a_i(\eta)\}_{i=0}^n \subseteq \c O_0$ and $\{\zeta_j(\eta) \}_{j=1}^n \subseteq \c C_0$ such that 
\[ \eta = \sum_{i=0}^n a_i(\eta) \tilde{\epsilon}_i + \sum_{j=1}^n D_j(\zeta_j(\eta)). \]
\end{theorem}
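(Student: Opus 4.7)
The plan is to deduce everything from the reduction mod $\gamma$ (Theorem on $(\Omega_{\bar S}^\bullet, \nabla(\bar D))$) by Monsky's standard lifting argument, which is possible because $\c C_0$ is $\gamma$-adically complete and separated, and because the reduction map $\mathrm{Pr}\colon\c C_0\to\bar S$ intertwines $D_l$ with $\bar D_l$. The crucial structural input, checked at the start of the section, is that $\gamma H_l \equiv \bar H_l \pmod{\gamma^{1+\epsilon}}$ for some positive $\epsilon$, so that reduction of the whole complex $(\Omega_{\c C_0}^\bullet,\nabla(D))\otimes_{\c O_0}(\c O_0/\gamma)$ is naturally $(\Omega_{\bar S}^\bullet,\nabla(\bar D))$.

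The first step is to establish the decomposition for an arbitrary $\eta\in\c C_0$ by successive approximation. Reduce $\eta$ modulo $\gamma$ to obtain $\bar\eta\in\bar S$. By the previous theorem, applied monomial-by-monomial to the support of $\bar\eta$ and using the weight filtration, we get $\bar c_i^{(0)}\in\bb F_q$ and $\bar\xi_j^{(0)}\in\bar S$ with
\[ \bar\eta = \sum_{i=0}^n \bar c_i^{(0)}\bar\epsilon_i + \sum_{j=1}^n \bar D_j\bar\xi_j^{(0)}, \]
the weight bounds in that theorem ensuring that $\bar\xi_j^{(0)}$ lies in an appropriate $\mathrm{Fil}$-piece. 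Now lift: take Teichmüller lifts $c_i^{(0)}\in\c O_0$ of $\bar c_i^{(0)}$, and lift $\bar\xi_j^{(0)}$ to $\xi_j^{(0)}\in\c C_0$ respecting support (so the weighted norm of $\xi_j^{(0)}$ matches that of its reduction). Then
\[ \eta^{(1)} := \eta - \sum_{i=0}^n c_i^{(0)}\tilde\epsilon_i - \sum_{j=1}^n D_j(\xi_j^{(0)}) \in \gamma\,\c C_0. \]
Writing $\eta^{(1)}=\gamma\,\eta'$ and applying the same procedure to $\eta'$ produces successive approximations $c_i^{(k)},\xi_j^{(k)}$ with the $k$-th-stage remainder in $\gamma^{k+1}\c C_0$. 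Setting
\[ a_i(\eta) = \sum_{k\geq 0}\gamma^k c_i^{(k)}\in\c O_0,\qquad \zeta_j(\eta) = \sum_{k\geq 0}\gamma^k \xi_j^{(k)}\in\c C_0, \]
the $\gamma$-adic completeness of $\c O_0$ and $\c C_0$ guarantees convergence, and passage to the limit yields the required identity $\eta=\sum a_i(\eta)\tilde\epsilon_i+\sum D_j(\zeta_j(\eta))$.

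The remaining assertions follow from the same machinery. Acyclicity in degrees $i<n$: given a closed $\omega\in\Omega^i_{\c C_0}$, its reduction $\bar\omega$ is closed in the reduced complex, hence exact by the previous theorem; lift a primitive and iterate on the $\gamma$-smaller remainder to produce a genuine primitive of $\omega$, just as above. Freeness of rank $n+1$ of $H^n$ on $\tilde{\c B}$: surjectivity is the content of the decomposition statement just proved; for $\c O_0$-linear independence, suppose $\sum a_i\tilde\epsilon_i = \sum D_j(\zeta_j)$ with not all $a_i$ zero. Let $k$ be the minimal $\gamma$-order among the $a_i$, divide by $\gamma^k$, and reduce mod $\gamma$; we obtain a nontrivial relation $\sum \bar a_i\bar\epsilon_i = \sum\bar D_j(\bar\zeta_j)$ in $\bar S$, contradicting the fact that $\c B=\{\bar\epsilon_i\}_{i=0}^n$ is an $\bb F_q$-basis of $H^n(\Omega_{\bar S}^\bullet,\nabla(\bar D))$.

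The main technical obstacle is not the iteration itself (which is formal once one has the reduced statement) but verifying that at each stage the lifts $\xi_j^{(k)}$ can be chosen in $\c C_0$ with norm controlled by that of $\eta^{(k)}$, so that the $\gamma$-adic sum defining $\zeta_j(\eta)$ converges and that the tail coefficients tend to zero as required by the definition of $\c C_0$. This is precisely where the filtered-by-weight refinement of the previous theorem (giving $\bar\xi_{j,u}\in\mathrm{Fil}^{i-1}(\bar S)$ for a monomial $x^u$ of weight $i$) is used: applied monomial-by-monomial on the expansion $\eta=\sum\eta(u)\tilde u$ and summed, it produces lifts whose coefficient-to-weight ratio is controlled, yielding elements genuinely in $\c C_0$ and allowing the $\gamma$-adic series to converge termwise.
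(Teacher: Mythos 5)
Your proposal is correct and takes essentially the same approach as the paper: both proofs reduce modulo $\gamma$, invoke the decomposition result for $(\Omega_{\bar S}^\bullet,\nabla(\bar D))$ (Theorem~1.2), lift, and iterate via $\gamma$-adic completeness (the Monsky successive-approximation argument). The paper compresses the iteration into ``a usual recursive argument,'' whereas you spell it out and also correctly flag the one technical point that makes it work --- that the $\mathrm{Fil}^{i-1}$-control on $\bar\xi_{j,u}$ in Theorem~1.2 lets you choose lifts of bounded norm in $\c C_0$ so that the $\gamma$-adic series converges.
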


\begin{proof}   It follows from the preceding result that for any $ i\in \bb Z_{\geq 0}$ and any $u\in \bb Z^n$ with $w(u) = i$ we may write
\[ \gamma^ix^u = \sum_{k=0}^i a_k(u)\tilde{\epsilon}_k + \sum_{j=1}^n D_j\xi_{j,u} + \gamma \zeta_u \ \bigg(=\sum_{k=0}^{\min \{ i,n \}}a_k(u)\tilde{\epsilon}_k + \sum_{j=1}^n D_j\xi_{j,u} + \gamma \zeta_u) \bigg), \]
where $\{a_k(u)\}_{k=0}^{\min \{i,n\}} \subseteq \c O_0$ and $ \{ \xi_{j,u} \}_{j=1}^n  \cup \{\zeta_u\} \subseteq \c C_0$.  The theorem is then proven by a usual recursive argument.
\end{proof}

Fix $\lambda \in \c O_0$.  For any $u \in \bb Z^n$, let $\tilde{u} = \gamma^{w(u)}x^u$.  Using $\{ \tilde{u} \}_{u \in \bb Z^n}$ as an orthonormal basis for $\c C_0$, let $A(\tilde{v}, \tilde{u})$ denote the coefficient of $\tilde{v}$ in the expression for $\alpha_1(\tilde{u})$ expressed using this basis. Then
\[ A(\tilde{v}, \tilde{u}) = \gamma^{w(u)-w(v)}B^{\sigma^{-1}}(pv-u). \]

\begin{theorem}
For any pair $u, v\in \bb Z^n$, ${\rm ord}\:A(\tilde{v}, \tilde{u}) \geq w(v)$.
\end{theorem}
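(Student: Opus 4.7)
The plan is to proceed directly from the explicit formula
\[ A(\tilde v,\tilde u) = \gamma^{w(u)-w(v)}\,B^{\sigma^{-1}}(pv-u), \]
taking the $p$-adic valuation of both sides. Since $\mathrm{ord}\:\gamma = 1/(p-1)$, this gives
\[ \mathrm{ord}\:A(\tilde v,\tilde u) = \frac{w(u)-w(v)}{p-1} + \mathrm{ord}\:B^{\sigma^{-1}}(pv-u), \]
and $\sigma^{-1}$ preserves valuation, so $\mathrm{ord}\:B^{\sigma^{-1}}(pv-u) = \mathrm{ord}\:B(pv-u)$.

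Next, I invoke the basic estimate for the coefficients of $F_\lambda$ already derived in the excerpt, namely $\mathrm{ord}\:B(w) \geq w(w)/(p-1)$ for all $w \in \bb Z^n$. Applied with $w = pv - u$, this yields
\[ \mathrm{ord}\:A(\tilde v,\tilde u) \geq \frac{w(u) - w(v) + w(pv-u)}{p-1}. \]
To conclude it therefore suffices to show $w(u) + w(pv-u) \geq p\,w(v)$.

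This is the only genuine step, and it uses only the two listed properties of the polyhedral weight: subadditivity $w(\alpha + \beta) \leq w(\alpha) + w(\beta)$ and positive homogeneity $w(c\alpha) = c\,w(\alpha)$ for $c \in \bb Z_{\geq 0}$. Writing $pv = (pv-u) + u$ and applying subadditivity gives $w(pv) \leq w(pv-u) + w(u)$, while homogeneity gives $w(pv) = p\,w(v)$; combining these yields exactly the required $w(u) + w(pv-u) \geq p\,w(v)$. Substituting back gives
\[ \mathrm{ord}\:A(\tilde v,\tilde u) \geq \frac{p\,w(v) - w(v)}{p-1} = w(v). \]

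There is essentially no obstacle here: the statement is a formal consequence of the coefficient estimate already established for $F_\lambda$ together with subadditivity of $w$. The only small bookkeeping point is to be careful that $w(pv-u)$ does not become negative in any ambiguous way — but $w$ is nonnegative by definition, so the chain of inequalities above is valid with no case analysis needed.
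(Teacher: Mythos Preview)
Your proof is correct and is precisely the argument the paper has in mind: the paper's proof reads in its entirety ``This is immediate from estimate (7) and inequality (5),'' where (7) is the bound $\mathrm{ord}\,B(w)\geq w(w)/(p-1)$ and (5) is subadditivity of $w$. You have simply written out the two-line computation that makes this immediate, including the use of homogeneity $w(pv)=p\,w(v)$ to turn subadditivity into the needed inequality $w(u)+w(pv-u)\geq p\,w(v)$.
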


\begin{proof}
This is immediate from estimate (7) and inequality (5).
\end{proof}

Consider now $A(\tilde{\epsilon}_j,\tilde{\epsilon}_i)$.
\begin{theorem}
Fix $i$, $0 \leq i \leq n$.  Let $u \in \bb Z^n$, $u \neq \epsilon_i$, with $w(u) \leq i$.  Then ${\rm ord}\:A(\tilde{u}, \tilde{\epsilon}_i)  > w(u)$.  In the case $u=\epsilon_i$,  ${\rm ord} \: A(\tilde{\epsilon}_i, \tilde{\epsilon}_i) = i$.  In fact, in this latter case, $\frac{A(\tilde{\epsilon}_i, \tilde{\epsilon}_i)}{p^i} \equiv 1 \pmod{\gamma}$.
\end{theorem}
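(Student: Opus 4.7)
The plan is to leverage the explicit formula $A(\tilde{u}, \tilde{\epsilon}_i) = \gamma^{i - w(u)} B^{\sigma^{-1}}(v)$ (with $v := pu - \epsilon_i$) together with the bound ${\rm ord}\:B(v) \geq w(v)/(p-1)$ from (5), and to improve on the general bound ${\rm ord}\:A(\tilde u, \tilde \epsilon_i) \geq w(u)$ of the preceding theorem by extracting an extra integer $+1$ in the weight. Subadditivity gives $w(v) \geq pw(u) - i$, with equality if and only if $v$ and $\epsilon_i$ lie in a common closed sectorial cone; if instead $w(v) \geq pw(u) - i + 1$ (forced by integrality once the strict inequality holds), the formula yields ${\rm ord}\:A \geq w(u) + 1/(p-1) > w(u)$. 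Thus assertion (1) reduces to ruling out the common-cone condition under the hypothesis $u \neq \epsilon_i$, $w(u) \leq i$.

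The cone analysis is the core of the argument. The vertex $\epsilon_i$ lies in $C_0$ and in $C_k$ for $k > i$, but in no $C_k$ with $1 \leq k \leq i$; only these two possibilities need to be excluded for $v$. If $v \in C_0$, then $v_j \geq 0$ forces $u_j \geq 1$ for $j \leq i$ and $u_j \geq 0$ for $j > i$, whence $w(u) = \sum_j u_j \geq i$, and $w(u) \leq i$ pins down $u = \epsilon_i$. If $v \in C_k$ for some $k > i$ with $v_k < 0$ strict, then $u_k \leq -1$, and a short computation using $u_j \geq u_k + 1$ for $j \leq i$, $u_j \geq u_k$ for $j > i$, $j \neq k$, and $w(u) = \sum_{j \neq k}u_j - nu_k$ gives $w(u) \geq i - u_k \geq i+1$, contradicting $w(u) \leq i$. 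The boundary case $v_k = 0$ reduces to $v \in C_0$. This completes the strict weight inequality and hence assertion (1).

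For $u = \epsilon_i$, $v = (p-1)\epsilon_i$ has $m(v) = 0$, so
\[ B((p-1)\epsilon_i) = \sum_{\ell \geq 0} b_{p-1+\ell}^i\, b_\ell^{n-i+1}\, \lambda^\ell. \]
The $\ell = 0$ term is $b_{p-1}^i$ of order exactly $i$; for $\ell \geq 1$ each term has order $\geq i + (n+1)\ell/(p-1) > i$, so ${\rm ord}\:A(\tilde\epsilon_i, \tilde\epsilon_i) = i$. For the mod-$\gamma$ refinement, the defining relation $\sum_{j \geq 0} \gamma^{p^j}/p^j = 0$ gives $\gamma^{p-1}/p \equiv -1 \pmod{\gamma}$, so $b_{p-1}/p = \gamma^{p-1}/(p\cdot(p-1)!) \equiv -1/(p-1)! \pmod{\gamma}$; Wilson's theorem gives $-1/(p-1)! \equiv 1 \pmod p$, and since $p \in \gamma\c O_0$ this descends mod $\gamma$. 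The $\ell \geq 1$ terms of $B(v)/p^i$ have order $\geq (n+1)/(p-1) > {\rm ord}\:\gamma$ and vanish mod $\gamma$, while $\sigma^{-1}$ preserves the ideal $(\gamma)$. Combining, $A(\tilde\epsilon_i, \tilde\epsilon_i)/p^i \equiv 1 \pmod{\gamma}$. The main technical hurdle is the cone case analysis for the strict weight inequality, particularly the bookkeeping of boundary cases where $v$ sits on the common face of multiple cones; once that is in place, the identity $\gamma^{p-1}/p \equiv -1 \pmod{\gamma}$, Wilson's theorem, and the order estimates on the tail of $B((p-1)\epsilon_i)$ are routine.
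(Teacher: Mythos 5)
Your proof is correct and reaches the conclusion by a route that is organized differently from the paper's, though both rest on the same underlying observation. The paper works term-by-term inside $B(pu-\epsilon_i)=\sum b_{m_1}\cdots b_{m_n}b_{\ell}\lambda^{\ell}$, splitting on whether $0\in\mathrm{Supp}(u)$ (i.e.\ $m(u)>0$) or not, and uses the hypothesis $u\neq\epsilon_i$, $w(u)\leq i$ to force $m(pu-\epsilon_i)\geq 1$ (resp.\ $\geq pm(u)+1$), which then yields the strict order gain directly in the estimate of each term. You instead feed the already-proved bound $\mathrm{ord}\,B(v)\geq w(v)/(p-1)$ (this is display (7), not (5) as you cite --- (5) is the subadditivity $w(u+v)\leq w(u)+w(v)$, which you also use) into $A(\tilde u,\tilde\epsilon_i)=\gamma^{\,i-w(u)}B^{\sigma^{-1}}(pu-\epsilon_i)$ and reduce the whole claim to the strict weight inequality $w(pu-\epsilon_i)>pw(u)-i$; you then prove strictness via the ``same closed sectorial cone'' criterion from (5) by checking that the hypotheses rule out $pu-\epsilon_i$ and $\epsilon_i$ sharing any cone $C_0$ or $C_k$ ($k>i$), which are the only cones containing $\epsilon_i$. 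Your version cleanly factors the combinatorial/cone input from the $p$-adic estimate, while the paper's version keeps the two interleaved; both give the same worst-case gain of $1/(p-1)$ over $w(u)$. For $u=\epsilon_i$ the two arguments are essentially identical: the paper only states $\gamma^{p-1}\equiv -p\pmod\gamma$ and says ``the last assertion follows,'' and the steps you spell out --- $\gamma^{p-1}/p\equiv -1\pmod\gamma$, Wilson's theorem for $(p-1)!$, the tail estimate $\geq (n+1)/(p-1)>\mathrm{ord}\,\gamma$, and that $\sigma^{-1}$ fixes $\gamma$ --- are exactly what fills that gap.
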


\begin{proof}
Any $u=(u_1,\dots,u_n) \in \bb Z^n $ may be uniquely represented 
\[ u= \sum_{i=1}^n u^{(i)}e_i + u^{(0)}U\]
 with $\{u^{(i)}\}_{i=0}^n \subseteq \bb Z_{\geq 0}$
and at least one of these conical coordinates $u^{(i)}=0$.  Let $\text{Supp}(u) = \{k\mid 0 \leq k \leq n \text{ and } u^{(k)} > 0\}$ be the support of $u$, where the $\{u^{(k)}\}$ are the conical coordinates of $u$ defined above.  As noted earlier, $w(u) = \sum_{i=0}^n u^{(i)}$.  Of course,  $\text{Supp}(\epsilon_i) = \{1,2,\dots,i\}$.  By hypothesis, $w(u)\leq i$.  In the case $w(u) < i$ or $w(u) = i$ but $u \neq \epsilon_i$, we have that $\text{Supp}(u)$ does not contain $ \{1,2,\dots,i\}$.  

In the case $0 \notin \text{Supp}(u)$ we have $m(u) = 0$ and it is easy to see $m(pu-\epsilon_i) \geq 1$ since $u_j = 0$ for some $j \in \{1,\dots,i\}$.  It follows that 
\begin{equation}
B(pu-\epsilon_i) = \sum b_{m_1}b_{m_2}\cdots b_{m_n}b_{\ell}\lambda^{\ell}, 
\end{equation}
where the sum on the right runs over all $(m_1,m_2,\dots,m_n,\ell) \in \bb Z_{\geq 0}^{n+1}$ with 
$\ell \geq m(pu-\epsilon_i)$, $m_k - \ell = pu_k-1$ for $1 \leq k \leq i$, and $m_k - \ell = pu_k$ for $i<k.$ Then each term on the right side of (9) satisfies
\begin{align*}
{\rm ord} \: (b_{m_1}b_{m_2}\cdots b_{m_n}b_{\ell}\sigma^{-1}(\lambda^{\ell})) &\geq \frac{\sum_{k=1}^n m_k + \ell}{p-1}\\
&= \frac{p(\sum_{k=1}^nu_k) - i +(n+1)\ell}{p-1}\\
&\geq w(u) + \frac{w(u)+ n + 1 -i}{p-1}\\
&> w(u).
\end{align*}
So ${\rm ord} \: A(\tilde{u},\tilde{\epsilon}_i) > w(u)$.

On the other hand, suppose now $0 \in \text{Supp}(u)$, so $u^{(0)} > 0$.  Clearly $u^{(0)} = m(u)$.  Since $w(u) \leq i$, it cannot be the case that $\text{Supp}(\epsilon_i) \subseteq \text{Supp}(u)$.  Thus $m(pu-\epsilon_i) \geq pu^{(0)} + 1 = pm(u) + 1$. As above we have
\[ {\rm ord} \: (b_{m_1}b_{m_2}\cdots b_{m_n}b_{\ell}\sigma^{-1}(\lambda^{\ell})) \geq \frac{\sum_{k=1}^n m_k + \ell}{p-1}
= \frac{p(\sum_{k=1}^nu_k) - i +(n+1)\ell}{p-1}. \]
Since $w(u) = \sum_{k=1}^nu_k  +(n+1)m(u)$, this estimate becomes 
\[ {\rm ord} \: (b_{m_1}b_{m_2}\cdots b_{m_n}b_{\ell}\sigma^{-1}(\lambda^{\ell})) \geq \inf  \frac{-i + pw(u) - (n+1)(pm(u) -\ell)}{p-1}, \]
where the inf runs over $\ell \geq m(pu-\epsilon_i) \geq pm(u) + 1$.  So $pm(u) - \ell \leq -1$ and hence ${\rm ord}\: A(\tilde{u}, \tilde{\epsilon}_i) > w(u)$ in these cases as well.

We assume finally  that $u=\epsilon_i$. Then 
\[ A(\tilde{\epsilon}_i, \tilde{\epsilon}_i) = \sigma^{-1}(B((p-1)\epsilon_i)) =  \sum _{\ell \geq 0}b_{p-1 +\ell}^ib_{\ell}^{n+1-i} \sigma^{-1}(\lambda^{\ell}). \]
Thus  
\[ \sigma^{-1}(B((p-1)\epsilon_i)) = \bigg(\frac{\gamma^{p-1}}{(p-1)!}\bigg)^i + \sum_{\ell \geq 1}b_{p-1 + \ell}^i b_{\ell}^{n+1-i}\sigma^{-1}(\lambda^{\ell}). \]
But 
\[ {\rm ord} \: \bigg(\sum_{\ell \geq 1} b_{p-1 + \ell}^ib_{\ell}^{n+1-i}\sigma^{-1}(\lambda^{\ell})\bigg) \geq \inf_{\ell \geq 1}  \bigg\{i + \frac{(n+1)\ell}{p-1}\bigg\}  =  i + \frac{n+1}{p-1}, \]
which establishes the assertion concerning ${\rm ord} \: A(\tilde{\epsilon}_i, \tilde{\epsilon}_i)$.  Since $\gamma^{p-1} \equiv -p \pmod{\gamma}$.  The last assertion follows.
\end{proof}

We consider now the action of Frobenius on cohomology.  In particular, we will work with the normalized basis  $\tilde{\c B}$ for $H^n(\Omega_{\c C_0}^\bullet, \nabla(D))$.  Let $\tilde{A}(\tilde{\epsilon}_j, \tilde{\epsilon}_i)$ be
the coefficient of $\tilde{\epsilon}_j$ when we express $\alpha_1(\tilde{\epsilon}_i)$ acting on cohomology in terms of the basis $\tilde{\c B}$ for $H^n(\Omega_{\c C_0}^\bullet, \nabla(D))$.  Our main result here is that the estimates in Theorem~1.5 of Frobenius acting on the cochain level hold as well on cohomology.
\begin{theorem}
For all $i,j$, $0 \leq i,j \leq n$, we have ${\rm ord} \: \tilde{A}(\tilde{\epsilon}_j, \tilde{\epsilon}_i) \geq j$.  For $i=j$, we have ${\rm ord} \: \tilde{A}(\tilde{\epsilon}_j, \tilde{\epsilon}_j) =j$.  In this case, in fact, $\frac{\tilde{A}(\tilde{\epsilon}_j, \tilde{\epsilon}_j)}{p^j} \equiv 1  \pmod{\gamma} $.  Finally, for $j < i$, we have ${\rm ord} \: \tilde{A}(\tilde{\epsilon}_j, \tilde{\epsilon}_i) > j$.
\end{theorem}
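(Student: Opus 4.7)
The strategy is to reduce the cohomological Frobenius estimates to the cochain-level bounds of Theorems~1.4 and 1.5 via the basis decomposition of Theorem~1.3. First, expand
\[ \alpha_1(\tilde{\epsilon}_i) = \sum_{v \in \bb Z^n} A(\tilde v,\tilde{\epsilon}_i)\,\tilde v \in \c C_0 \]
and apply Theorem~1.3 to each $\tilde v$ to write $\tilde v \equiv \sum_{k=0}^n a_k(\tilde v)\tilde{\epsilon}_k \pmod{\nabla(D)(\Omega^{n-1}_{\c C_0})}$ with $a_k(\tilde v) \in \c O_0$.  Passing to cohomology gives
\[ \tilde A(\tilde{\epsilon}_j,\tilde{\epsilon}_i) = \sum_v A(\tilde v,\tilde{\epsilon}_i)\,a_j(\tilde v). \]
Since $a_k(\tilde{\epsilon}_l) = \delta_{kl}$, the term $v = \epsilon_i$ contributes precisely $A(\tilde{\epsilon}_i,\tilde{\epsilon}_i)\delta_{ij}$, which already carries the diagonal information of Theorem~1.5; the remaining task is to control the other contributions.

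The central analytic point, and the main obstacle, is the refined estimate
\[ \mathrm{ord}\,a_j(\tilde v) \geq j - w(v) \qquad \text{whenever } j > w(v). \]
To prove this I would iterate Theorem~1.3: set $\eta_0 = \tilde v$ and inductively define $\gamma\eta_{m+1} = \eta_m - \sum_k c_k^{(m)}(v)\tilde{\epsilon}_k - \sum_l D_l\,\xi_l^{(m)}$, where the coefficients $c_k^{(m)}$ come from the class of $\bar\eta_m$ in $\bar H^n$ via Theorem~1.2, and $\xi_l^{(m)} \in \c C_0$ is a lift of the characteristic-$p$ solution $\bar\xi_l^{(m)} \in \mathrm{Fil}^{w(\bar\eta_m)-1}\bar S$, chosen so that its monomial support lies in weights $\leq w(\bar\eta_m) - 1$.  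Then $a_j(\tilde v) = \sum_{m \geq 0}\gamma^m c_j^{(m)}(v)$.  The key observation is that in $D_l\xi_l^{(m)} = E_l\xi_l^{(m)} + \gamma H_l\xi_l^{(m)}$, the $E_l$-part together with the leading ($i=0$) contribution of $\gamma H_l\xi_l^{(m)}$ cancels exactly against $\eta_m - \sum_k c_k^{(m)}\tilde{\epsilon}_k$, so $\gamma\eta_{m+1}$ inherits only the higher-order ($i \geq 1$) contributions of $\gamma H_l\xi_l^{(m)}$.  Using the estimate $\mathrm{ord}(p^i\gamma_i) \geq p^{i+1}/(p-1) - 1$, each such contribution carries $p$-adic valuation far exceeding the weight increase it can produce. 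Tracking these bounds through the recursion shows that $c_j^{(m)}(v) = 0$ unless $m/(p-1) \geq j - w(v)$, which gives the claimed bound.

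Granting the refined bound, the three parts of the theorem follow readily.  For part~(1), Theorem~1.4 gives $\mathrm{ord}\,A(\tilde v,\tilde{\epsilon}_i) \geq w(v)$, so each summand has ord at least $w(v) + \max\{0,j-w(v)\} \geq j$.  For the diagonal case $i = j$, the term $v = \epsilon_j$ contributes $A(\tilde{\epsilon}_j,\tilde{\epsilon}_j)$ with $A(\tilde{\epsilon}_j,\tilde{\epsilon}_j)/p^j \equiv 1 \pmod{\gamma}$ by Theorem~1.5; every other $v$ has contribution of ord strictly greater than $j$, since Theorem~1.5 yields $\mathrm{ord}\,A > w(v)$ when $w(v) \leq j$ and $v \neq \epsilon_j$ (combined with the $a_j$-bound when $w(v) < j$), while Theorem~1.4 yields $\mathrm{ord}\,A \geq w(v) > j$ when $w(v) > j$.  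For the off-diagonal case $j < i$, the term $v = \epsilon_i$ contributes zero since $a_j(\tilde{\epsilon}_i) = 0$, and every other $v$ has contribution of ord strictly greater than $j$: when $w(v) \leq i$ this follows from Theorem~1.5 together with the $a_j$-bound (noting $v \neq \epsilon_i$ automatically since $w(\epsilon_i) = i \neq w(v)$ if $w(v) \neq i$, and directly otherwise), and when $w(v) > i$ from Theorem~1.4.
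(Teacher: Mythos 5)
Your overall architecture is the same as the paper's: expand $\alpha_1(\tilde\epsilon_i) = \sum_v A(\tilde v,\tilde\epsilon_i)\tilde v$, project each $\tilde v$ onto the basis $\tilde{\mathcal B}$ of $H^n$ via Theorem~1.3 to get $\tilde A(\tilde\epsilon_j,\tilde\epsilon_i) = \sum_v A(\tilde v,\tilde\epsilon_i)a_j(\tilde v)$, and then combine the cochain-level estimates of Theorems~1.4 and 1.5 with a bound of the form $\mathrm{ord}\,a_j(\tilde v) \geq j - w(v)$ for $w(v)\leq j$. That last inequality is exactly the paper's Lemma~1.11, and you are right that it is the crux; your closing case analysis (once the inequality is granted) matches the paper's.

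Where you diverge is in \emph{how} you establish this inequality. You propose a $\gamma$-adic recursion: iterate the mod-$\gamma$ reduction from the proof of Theorem~1.3, write $a_j(\tilde v) = \sum_m \gamma^m c_j^{(m)}(v)$, and assert that $c_j^{(m)}(v) = 0$ unless $m/(p-1) \geq j - w(v)$. This threshold is not obvious, and you do not prove it; ``tracking these bounds through the recursion'' is precisely the hard part. The difficulty is that each step of the recursion, via the tail terms $p^i\gamma_i H_l^{(1)}(x^{p^i})\xi_l^{(m)}$, can raise the monomial weight by $p^i$ while costing only a fixed $p$-adic amount, and after dividing by $\gamma$ you must track both the weight growth and the residual valuation simultaneously. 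In a few sample cases the threshold does check out numerically, but nothing you have written establishes it in general, and if it failed the whole estimate would collapse.

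The paper avoids this by organizing the bookkeeping around powers of $p$ rather than powers of $\gamma$. Lemma~1.8 first reduces entirely inside $T^{(i)}$ using the truncated operator $D_l^{(1)} = E_l + \gamma H_l^{(1)}$, whose reduction is $\bar D_l$ and which does not raise weight. Lemma~1.9 then substitutes $D_l^{(1)} = D_l - \sum_{m\geq 1}\gamma_m p^m H_l^{(1)}(x^{p^m})$ and observes, via $\mathrm{ord}\bigl(\tfrac{\gamma_m p^m}{\gamma^{p^m}}\bigr) = p^m - 1$, that the error lies in $\sum_{k>i} p^{k-i}T^{(k)}$: every unit of weight overflow carries at least one power of $p$. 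Lemma~1.10 iterates this $p$-adically, and Lemma~1.11 follows immediately. So the answer to ``how does weight jump by $p^m$ interact with valuation'' is encoded once and cleanly in Lemma~1.9 rather than being re-derived at each step of a $\gamma$-recursion. You should replace your asserted threshold with a proof along these lines (or supply a genuine inductive argument for the claim you made); as written there is a real gap at the central analytic step.
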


Our proof will require several lemmas. We note however that the main result of this section is an immediate consequence of this theorem.  In particular, here is our main result:
\begin{theorem}
Let the characteristic $p$ be arbitrary.  Fix an element $\bar{\lambda} \in \overline{\bb  F}_p^*$, say $\bar{\lambda} \in \bb F_q^*$.  Let  $\{\omega_i\}_{i=0}^n$ be the eigenvalues of $H^n(\text{Frob}_0^{\bullet})$ acting linearly over $\Omega_0$ on $H^n(\Omega_{\c C_0}^\bullet,\nabla(D))$.  
Then these eigenvalues are integers in $\bb Z_p[\zeta_p]$  and may be ordered so that 
\[ \frac{\omega_i}{q^i} \equiv 1 \pmod{\gamma}. \]
\end{theorem}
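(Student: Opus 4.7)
The plan is to deduce Theorem~1.7 from Theorem~1.6 via a slope-filtration analysis of $\alpha_1$ acting on $H^n := H^n(\Omega^\bullet_{\c C_0}, \nabla(D))$. By Theorem~1.6, the matrix $\tilde{A}$ of $\alpha_1$ in the basis $\tilde{\c B}$ is approximately diagonal: the diagonal entry $\tilde{A}_{ii}$ has exact valuation $i$ with $\tilde{A}_{ii}/p^i \equiv 1 \pmod{\gamma}$; entries above the diagonal ($j<i$) have valuation strictly greater than $j$; and entries below the diagonal ($j>i$) have valuation at least $j$. In particular the formal Newton polygon of $\tilde{A}$ has slopes $0, 1, 2, \ldots, n$, which are distinct integers.

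Because these slopes are distinct integers, a Hensel/fixed-point iteration against the triangular structure of $\tilde{A}$ produces, for each $i$, an $\alpha_1$-stable rank-one $\c O_0$-submodule $L_i \subset H^n$ with a generator $v_i = \tilde{\epsilon}_i + \sum_{j \neq i} c_{j,i}\,\tilde{\epsilon}_j$ whose corrections $c_{j,i}$ have positive valuation; this yields the semilinear slope decomposition $H^n = L_0 \oplus L_1 \oplus \cdots \oplus L_n$, with $\alpha_1$ acting on $L_i$ as semilinear multiplication by $\omega'_i \in \c O_0$ of valuation $i$. The triangular structure of $\tilde{A}$ guarantees that every correction to $\tilde{A}_{ii}$ appearing in the eigenvalue equation for $v_i$ has valuation strictly greater than $i$, so that $\omega'_i \equiv \tilde{A}_{ii} \pmod{p^i \gamma}$, equivalently $\omega'_i / p^i \equiv 1 \pmod{\gamma}$.

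The $\c O_0$-linear operator $\alpha_0 = \alpha_1^a$ then acts on $L_i$ as multiplication by $\omega_i := N_{\Omega_0/\Omega_1}(\omega'_i)$. Writing $\omega'_i = p^i(1 + \gamma u_i)$ with $u_i \in \c O_0$ gives $\omega_i = q^i \cdot N_{\Omega_0/\Omega_1}(1 + \gamma u_i)$, and since $\Omega_0/\Omega_1$ is unramified, $N_{\Omega_0/\Omega_1}$ carries $1$-units to $1$-units; hence $\omega_i / q^i \equiv 1 \pmod{\gamma}$. Integrality $\omega_i \in \c O_1 = \mathbb{Z}_p[\zeta_p]$ is automatic since the norm lands in $\Omega_1$. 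The principal obstacle is quantitative: carrying out the eigenvector iteration in the second paragraph requires not just the strict inequality $\mathrm{ord}\,\tilde{A}(\tilde{\epsilon}_j,\tilde{\epsilon}_i) > j$ for $j < i$ as stated in Theorem~1.6, but the finer quantitative bound implicit in its proof (analogous to the estimate $\mathrm{ord}\,A(\tilde{u},\tilde{\epsilon}_i) \geq w(u) + (w(u) + n + 1 - i)/(p-1)$ that appeared in the proof of Theorem~1.5), so as to guarantee that every perturbation to the diagonal entry $\tilde{A}_{ii}$ has valuation strictly greater than $i$.
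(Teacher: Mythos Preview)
Your overall strategy---deducing Theorem 1.7 as a formal consequence of the triangular estimates in Theorem 1.6---is exactly the paper's approach: the authors' proof is a single sentence citing \cite[Proposition~2.20]{S2} and its corollaries for precisely this deduction. At the level of plan you are aligned with them.

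Your self-flagged concern, however, points to a genuine flaw in the specific iteration you describe, though not the one you diagnose. If you try to produce the slope-$i$ eigenvector $v_i = \tilde{\epsilon}_i + \sum_{j\neq i} c_{j,i}\,\tilde{\epsilon}_j$ directly, the components with $j<i$ need \emph{not} have positive valuation: the eigenvector equation gives roughly $c_{j,i}\approx \tilde{A}(\tilde{\epsilon}_j,\tilde{\epsilon}_i)/\omega'_i$, and since ${\rm ord}\,\omega'_i = i$ while ${\rm ord}\,\tilde{A}(\tilde{\epsilon}_j,\tilde{\epsilon}_i)$ is only slightly larger than $j$, one gets ${\rm ord}\,c_{j,i}$ near $j-i<0$. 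So your claim that ``corrections have positive valuation'' fails for $j<i$, and the congruence argument for $\omega'_i$ then breaks.

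The remedy is not sharper estimates than Theorem 1.6 but a different organization: argue by \emph{filtration} rather than all-at-once diagonalization. First find only the slope-$0$ line $L_0=\c O_0 v_0$ with $v_0=\tilde{\epsilon}_0+\sum_{j\geq 1}x_j\tilde{\epsilon}_j$; here the fixed-point iteration does converge with ${\rm ord}\,x_j\geq j$, using only ${\rm ord}\,\tilde{A}_{ji}\geq j$ and ${\rm ord}\,\tilde{A}_{0i}>0$ for $i\geq 1$, and yields $\omega'_0\equiv\tilde{A}_{00}\equiv 1\pmod{\gamma}$. On the quotient $H^n/L_0$, in the basis of images of $\tilde{\epsilon}_1,\dots,\tilde{\epsilon}_n$, the induced matrix has $(j,i)$-entry $\tilde{A}_{ji}-\tilde{A}_{0i}x_j$; since ${\rm ord}(\tilde{A}_{0i}x_j)>j$, this quotient matrix inherits all three properties of Theorem~1.6 with indices shifted by one. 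Iterate. At stage $i$ you extract $\omega'_i$ with $\omega'_i/p^i\equiv 1\pmod{\gamma}$, and your norm argument then gives $\omega_i=N_{\Omega_0/\Omega_1}(\omega'_i)\in\c O_1$ with $\omega_i/q^i\equiv 1\pmod{\gamma}$. No estimates beyond those stated in Theorem~1.6 are invoked; this is essentially what \cite[Proposition~2.20]{S2} provides and why the paper calls the deduction ``formal.''
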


\begin{proof}
The argument in \cite[Proposition 2.20]{S2} and its several corollaries may be used to prove Theorem 1.7 as a formal consequence of the approximate triangular form for the Frobenius matrix given in Theorem 1.6.
\end{proof}

So we turn now to the proof of Theorem 1.6. 
Let $u \in \bb Z^n$.  According to Theorem~1.3, we may write 
\[ \tilde{u} = \gamma^{w(u)} x^u = \sum_{k=0}^{\infty} a(\tilde{u}, \tilde{\epsilon}_k) \tilde{\epsilon}_k + \sum_{j=1}^n D_j \zeta_j(\tilde{u}) \]
with $a(\tilde{u}, \tilde{\epsilon}_k) \in \c O_0$ and $\zeta_j(\tilde{u}) \in \c C_0$. Then 
\begin{equation}
\tilde{A}(\tilde{\epsilon}_j, \tilde{\epsilon}_i) = \sum_{u \in \bb Z^n} A(\tilde{u}, \tilde{\epsilon}_i)a(\tilde{u}, \tilde{\epsilon}_j)
 = A(\tilde{\epsilon}_j, \tilde{\epsilon}_i) + \sum_{u \neq \epsilon_j} A(\tilde{u}, \tilde{\epsilon}_i)a(\tilde{u}, \tilde{\epsilon}_j).         
\end{equation}

We will need the following lemma.
\begin{lemma}
Let $T^{(i)}$ be the $\c O_0$-submodule of $\c C_0$ generated by $\{ \tilde{u} \}_{u \in \bb Z^n, w(u) \leq i}$.  Let $\beta \in T^{(i)}$.  Let $H^{(1)}_l(x) = \frac{x_l \partial K}{\partial x_l}(x) = x_l - \frac{\lambda}{x_1x_2\cdots x_n}$, so that $\gamma H^{(1)}_l(x) = H_l(x) - \sum_{l=1}^{\infty} \gamma_l p^l H^{(1)}_l(x^{p^l})$.  Let $D^{(1)}_l = E_l + \gamma H^{(1)}_l(x)$.  Then there are elements $ \{ a(\beta, \tilde{\epsilon}_k) \}_{k=0}^i \subseteq \c O_0$ and $\{\zeta_l(\beta)\}_{l=1}^n \subseteq T^{(i-1)}$ such that 
\[ \beta = \sum_{k=0}^i a(\beta, \tilde{\epsilon}_k) \tilde{\epsilon}_k + \sum_{l=1}^n D^{(1)}_l \zeta_l(\beta). \]
\end{lemma}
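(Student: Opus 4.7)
The plan is to combine Theorem~1.2 (the characteristic-$p$ decomposition) with an iterative lift to $\c O_0$, in the ``Monsky style'' used for Theorem~1.3, but now with the tighter filtration bookkeeping that becomes possible because $D_l^{(1)}$ raises weight by at most one---in contrast with $D_l$, which involves the higher Frobenius twists $H^{(1),\sigma^j}_l(x^{p^j})$ of weight $p^j$ and therefore has no clean filtration bound.

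First I would verify two properties of $D_l^{(1)}$: (i) $D_l^{(1)}$ sends $T^{(k)}$ into $T^{(k+1)}$, and (ii) $D_l^{(1)}$ reduces modulo $\gamma$ to $\bar D_l$ on $\bar S$. For (i), $E_l\tilde u = u_l\tilde u$ preserves weight, and since $H_l^{(1)} = x_l - \lambda x^U$ is a sum of two monomials of weight exactly one, the identity $\gamma H_l^{(1)}\tilde u = \sum_{v\in\{e_l,U\}}(\pm)\gamma^{w(u)+1-w(u+v)}\tilde{u+v}$ (together with $w(u+v)\le w(u)+1$) shows that $\gamma H_l^{(1)}(\tilde u)$ lies in $T^{(w(u)+1)}$ with $\c O_0$-coefficients. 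For (ii), the terms surviving modulo $\gamma$ are exactly those with $w(u)+1-w(u+v)=0$, i.e.\ those with $u+v$ in the same closed sectorial cone as $u$; this is precisely the truncated product rule in $\bar S$, so multiplication by $\gamma H_l^{(1)}$ reduces to multiplication by $\bar H_l^{(1)} = \bar H_l$, and consequently $\overline{D_l^{(1)}} = \bar D_l$.

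Given $\beta\in T^{(i)}$, I would then apply Theorem~1.2 monomial-by-monomial to $\bar\beta\in\mathrm{Fil}^i(\bar S)$ (a finite sum over the weight shell $\{u:w(u)\le i\}$) to obtain $\bar a_k\in\bb F_q$ for $0\le k\le i$ and $\bar\xi_l\in\mathrm{Fil}^{i-1}(\bar S)$ with $\bar\beta = \sum_k\bar a_k\bar\epsilon_k + \sum_l\bar D_l\bar\xi_l$. Choose lifts $a_k^{(0)}\in\c O_0$ of $\bar a_k$ and $\xi_l^{(0)}\in T^{(i-1)}$ of $\bar\xi_l$. By (i) and (ii), the element
\[ \beta_1 := \beta - \sum_{k=0}^i a_k^{(0)}\tilde\epsilon_k - \sum_{l=1}^n D_l^{(1)}\xi_l^{(0)} \]
lies in $T^{(i)}$ and has vanishing reduction in $\bar S$. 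Because $T^{(i)}$ is a free $\c O_0$-module of finite rank (the weight shell $\{u:w(u)\le i\}$ is finite), $\gamma\c C_0\cap T^{(i)} = \gamma T^{(i)}$, so $\beta_1 = \gamma\beta^{(1)}$ for some $\beta^{(1)}\in T^{(i)}$.

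Iterating the previous step on $\beta^{(1)}$ produces sequences $\{a_k^{(m)}\}_{m\ge 0}\subseteq\c O_0$ and $\{\xi_l^{(m)}\}_{m\ge 0}\subseteq T^{(i-1)}$; setting $a(\beta,\tilde\epsilon_k) := \sum_{m\ge 0}\gamma^m a_k^{(m)}$ and $\zeta_l(\beta) := \sum_{m\ge 0}\gamma^m\xi_l^{(m)}$ gives series that converge in $\c O_0$ and in the $p$-adically complete finite-rank $\c O_0$-module $T^{(i-1)}$ respectively, which yields the claimed decomposition. The main (mild) obstacle is step (ii)---pinning down that the mod-$\gamma$ reduction of the ``simpler'' operator $D_l^{(1)}$ equals the operator $\bar D_l$ appearing in Theorem~1.2. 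Once that is in hand, the recursion runs inside the finitely generated pieces $T^{(i-1)}$ rather than the larger $\c C_0$, which is exactly what upgrades Theorem~1.3 to the filtration-preserving statement claimed here.
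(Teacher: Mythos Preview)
Your argument is correct and is essentially the same as the paper's proof: the paper also reduces $\beta$ modulo $\gamma$, applies Theorem~1.2, lifts (invoking parenthetically exactly your properties (i) and (ii), that $D_l^{(1)}(T^{(\ell-1)})\subseteq T^{(\ell)}$ and that $D_l^{(1)}$ reduces to $\bar D_l$), obtains a remainder $\gamma\eta^{(1)}$ with $\eta^{(1)}\in T^{(i)}$, and iterates. You have simply fleshed out the two parenthetical claims and the convergence step that the paper leaves implicit.
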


\begin{proof}
We reduce $\beta$ modulo $\gamma$, obtaining $\bar \beta$ in $\bigoplus_{k=0}^i \bar{S}^{(k)} =  \text{Fil}^i \bar{S}$.   Then using Theorem 1.2 and lifting we obtain (since the reduction of $D^{(1)}_l$ is $\bar D_l$ and $D^{(1)}_l(T^{(\ell - 1)}) \subseteq T^{(\ell)}$) that
\[ \beta =  \sum_{k=0}^i a^{(1)}(\beta, \tilde{\epsilon}_k) \tilde{\epsilon}_k + \sum_{l=1}^n  D^{(1)}_l \zeta_l^{(1)}(\beta) + \gamma \eta^{(1)} \]
with $\{ a^{(1)}(\beta, \tilde{\epsilon}_k) \}_{k=0}^i \subseteq \c O_0$ and $\{ \zeta_l^{(1)}(\beta) \}_{l=1}^n  \subseteq T^{(i-1)}$ and $\eta^{(1)} \in T^{(i)}$.  We may continue in the same way now with $\eta^{(1)}$ replacing $\beta$, and we obtain in the end 
\[ \beta = \sum_{k=0}^i a(\beta, \tilde{\epsilon}_k) \tilde{\epsilon}_k + \sum_{l=1}^n  D^{(1)}_l \zeta_l(\beta) \]
with $\{ a(\beta, \tilde{\epsilon}_k) \}_{k=0}^i \subseteq \c O_0$ and $\zeta_l(\beta) \subseteq T^{(i-1)}$.
\end{proof}

We  consider next the reduction modulo the submodule $\sum_{l=1}^nD_l \c C_0$ of $\c C_0$.
\begin{lemma}
Let $i$ be an arbitrary non-negative integer and let $\beta \in T^{(i)}$.  Then there exist $\{a_j \}_{j=0}^i \subseteq \c O_0$, $\{\zeta_l \}_{l=1}^n \subseteq \c C_0$, and $\{\omega_k\}_{k=i+1}^{\infty}$ with $\omega_k \in T^{(k)}$ for all $k \geq i+1$ such that
\[ \beta = \sum_{j=0}^i a_j \tilde{\epsilon}_j + \sum_{l=1}^n D_l \zeta_l + \sum_{k=i+1}^{\infty} p^{k-i} \omega_k. \]
\end{lemma}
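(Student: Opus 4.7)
The plan is to iterate Lemma~1.8, replacing $D_l^{(1)}$ by $D_l$ at each step and carefully partitioning the resulting discrepancy. Write $D_l = D_l^{(1)} + M_l$ where
\[ M_l := D_l - D_l^{(1)} = \sum_{j\ge 1} p^j\gamma_j\, H_l^{(1),\sigma^j}(x^{p^j}) \]
acts by multiplication. Applying Lemma~1.8 to $\beta^{(0)}:=\beta\in T^{(i)}$ gives $a_k^{(0)}\in\c O_0$ and $\zeta_l^{(0)}\in T^{(i-1)}$ with
\[ \beta^{(0)} = \sum_{k=0}^i a_k^{(0)}\tilde\epsilon_k + \sum_{l=1}^n D_l\zeta_l^{(0)} - \sum_{l=1}^n M_l\zeta_l^{(0)}; \]
the goal is to split the final error into a ``weight $>i$'' piece that fits the form $\sum_{k\ge i+1}p^{k-i}\omega_k$, and a ``weight $\le i$'' piece that is divisible by $p$ and can be fed back into the iteration.

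Expanding $\zeta_l^{(0)}=\sum_{w(u)\le i-1}c_u\tilde u$ (a finite sum), each monomial contribution to $\sum_l M_l\zeta_l^{(0)}$ has the form $C\tilde v$ with $v = u + p^j e_l$ or $v = u + p^j U$, so $w(v)\le w(u)+p^j\le i-1+p^j$, and
\[ {\rm ord}(C) \ge j + {\rm ord}(\gamma_j) + \frac{w(u)-w(v)}{p-1} \ge \frac{p^{j+1}}{p-1}-1-\frac{p^j}{p-1} = p^j-1, \]
using the bound ${\rm ord}(\gamma_j)\ge p^{j+1}/(p-1)-(j+1)$ from the introduction together with $w(v)-w(u)\le p^j$. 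If $w(v)=k\ge i+1$, then $k-i\le p^j-1\le {\rm ord}(C)$, so these contributions combine into $\eta_+^{(0)}:=\sum_{k\ge i+1}p^{k-i}\omega_k^{(0)}$ with $\omega_k^{(0)}\in T^{(k)}$; if $w(v)\le i$, then (since $j\ge 1$) ${\rm ord}(C)\ge p-1\ge 1$, so these contributions combine into $p\beta^{(1)}$ with $\beta^{(1)}\in T^{(i)}$ (the coefficient at each $\tilde v$ is an infinite sum over $j$ that converges $p$-adically in $\c O_0$, since ${\rm ord}(C)\ge p^j-1\to\infty$). This yields
\[ \beta^{(0)} = \sum_k a_k^{(0)}\tilde\epsilon_k + \sum_l D_l\zeta_l^{(0)} - \eta_+^{(0)} - p\beta^{(1)}. \]

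Iterating the same step with $\beta^{(1)}$ in place of $\beta^{(0)}$, and so on, produces sequences $(a_k^{(m)}),(\zeta_l^{(m)}),(\omega_k^{(m)}),(\beta^{(m)})$ satisfying the analogous identity at each $m$. Telescoping and using $(-p)^N\beta^{(N)}\to 0$ in $\c C_0$, we obtain
\[ \beta = \sum_{k=0}^i a_k\tilde\epsilon_k + \sum_{l=1}^n D_l\zeta_l + \sum_{k\ge i+1} p^{k-i}\omega_k, \]
where $a_k=\sum_{m\ge 0}(-p)^m a_k^{(m)}$, $\zeta_l=\sum_{m\ge 0}(-p)^m\zeta_l^{(m)}$, and $\omega_k=-\sum_{m\ge 0}(-p)^m\omega_k^{(m)}$; the last series converges in $T^{(k)}$ because that module is a finite-rank, hence $p$-adically complete, $\c O_0$-module. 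The main obstacle is the matching pair of estimates ${\rm ord}(C)\ge p^j-1$ and $w(v)-i\le p^j-1$, which is exactly what lines up the $p$-adic and weight bookkeeping so that the high-weight error fits into $\sum p^{k-i}T^{(k)}$ while the low-weight error acquires an extra factor of $p$; once this is in place, the iteration is purely formal.
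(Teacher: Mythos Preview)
Your proof is correct, but it is more elaborate than necessary. The paper's argument is a single step with no iteration: after writing $\beta = \sum_j a_j\tilde\epsilon_j + \sum_l D_l^{(1)}\zeta_l$ from Lemma~1.8 and replacing $D_l^{(1)}$ by $D_l - \sum_{m\ge 1}\gamma_m p^m H_l^{(1)}(x^{p^m})$, the paper observes that for each fixed $m\ge 1$ the \emph{entire} $m$-th error term
\[
\gamma_m p^m \sum_l H_l^{(1)}(x^{p^m})\,\zeta_l \;=\; \frac{\gamma_m p^m}{\gamma^{p^m}}\cdot \sum_l \gamma^{p^m} H_l^{(1)}(x^{p^m})\,\zeta_l
\]
already lies in $p^{k-i}T^{(k)}$ with $k=p^m+i-1$: the second factor has weight $\le p^m+i-1$ (so lies in $T^{(k)}$), and the scalar $\gamma_m p^m/\gamma^{p^m}$ has $p$-adic order exactly $p^m-1=k-i$. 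There is no low-weight residue to feed back, and the lemma follows immediately.

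The difference is that you sort the error by the exact weight $w(v)$ of each resulting monomial, which forces you to treat the monomials with $w(v)\le i$ separately and hence to iterate. The paper instead sorts by the index $m$ and exploits the nesting $T^{(k')}\subseteq T^{(k)}$ for $k'\le k$: there is no requirement that $\omega_k$ be supported in weight exactly $k$, so one may simply drop the whole $m$-th term into $T^{(p^m+i-1)}$. Your finer bookkeeping does yield a little extra (for instance your $\zeta_l$ lands in $T^{(i-1)}$ rather than merely $\c C_0$), but none of that is used downstream, and the iteration it necessitates is avoidable.
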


\begin{proof}
We  begin with 
\[ \beta = \sum_{j=0}^i a_j \tilde{\epsilon}_j + \sum_{l=1}^n D^{(1)}_l \zeta_l \]
with $\{a_j\}_{j=0}^i \subseteq \c O_0$ and $\{\zeta_l\}_{l=1}^n \subseteq T^{(i-1)}$ from Lemma 1.8.  Then since $D_l^{(1)} = D_l - \sum_{m=1}^{\infty} \gamma_m p^m H_l^{(1)}(x^{p^m}) \zeta_l$, we have
\[ \beta = \sum_{j=0}^i a_j \tilde{\epsilon}_j + \sum_{l=1}^n D_l \zeta_l - \sum_{m=1}^{\infty} \gamma_m p^m \sum_{l=1}^n H^{(1)}_l(x^{p^m}) \zeta_l. \]
We rewrite the third sum on the right-hand side as
\begin{equation} \sum_{m=1}^\infty \frac{\gamma_mp^m}{\gamma^{p^m}}\sum_{l=1}^n \gamma^{p^m} H^{(1)}_l(x^{p^m}) \zeta_l. 
\end{equation}
Clearly, $H^{(1)}(x^{p^m})$ has weight $p^m$, so the inner sum on the right-hand side of (11) lies in $T^{(p^m+i-1)}$.  Note that
\[  {\rm ord}\,\bigg(\frac{\gamma_mp^m}{\gamma^{p^m}}\bigg) = p^m-1. \]
For each $m, m \geq 1$, if we put $k=p^m+i-1$, then $k\geq i+1$ and $k-i=p^m-1$.  It now follows that expression (11) lies in $\sum_{k=i+1}^\infty p^{k-i}T^{(k)}$.  This completes the argument.
\end{proof}

The following lemma gives some of the interaction of the weight and $p$-adic filtrations. 
\begin{lemma}
Let $\beta \in T^{(i)}$.  There exist $\{a(\beta, \tilde{\epsilon}_k) \}_{k=0}^i \cup \{\tilde{a}(\beta, \tilde{\epsilon}_k) \}_{k=i+1}^{\infty} \subseteq \c O_0$ and  $\{ \zeta_l \}_{l=1}^n \subseteq \c C_0$ such that 
\begin{equation}
\beta = \sum_{k=0}^i a(\beta, \tilde{\epsilon}_k)\tilde{\epsilon}_k + \sum_{k=i+1}^{\infty}  p^{k-i} \tilde{a}(\beta, \tilde{\epsilon}_{k})\tilde{\epsilon}_k + \sum_{l=1}^n D_l \zeta_l(\beta). 
\end{equation}
We will sometimes abbreviate the notation writing  $a_k = a(\beta, \tilde{\epsilon}_k)$, $\tilde{a}_k= \tilde{a}(\beta, \tilde{\epsilon}_k)$, and $\zeta_l = \zeta_l(\beta)$. 
\end{lemma}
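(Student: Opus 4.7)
The plan is to eliminate the ``$p$-adic remainder'' in Lemma 1.9 by iterating that lemma on each of its remainder terms. Applying Lemma 1.9 directly to $\beta$ gives an initial decomposition
\[ \beta = \sum_{j=0}^i a_j^{(0)}\tilde{\epsilon}_j + \sum_{l=1}^n D_l\zeta_l^{(0)} + \sum_{k>i} p^{k-i}\omega_k^{(0)}, \qquad \omega_k^{(0)}\in T^{(k)}. \]
At stage $s \ge 1$ I would apply Lemma 1.9 to the current remainder term $\omega_{i+s}^{(s-1)} \in T^{(i+s)}$ of smallest weight, expressing it as a sum of $\tilde{\epsilon}_j$'s for $j \le i+s$, a $D_l$-boundary, and a further Lemma 1.9 remainder supported at weights $> i+s$. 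Multiplying through by the prefactor $p^s$ and rewriting $p^s c_j \tilde{\epsilon}_j = p^{j-i}(p^{i+s-j}c_j)\tilde{\epsilon}_j$ for $i < j \le i+s$, the pieces update as follows: (i) $a_j^{(s-1)}$ picks up a correction in $p^s \c O_0$ for $j \le i$; (ii) $\tilde{a}_j^{(s-1)}$ for $i < j < i+s$ picks up a correction in $p^{i+s-j}\c O_0$, while the coefficient $\tilde{a}_{i+s}$ first appears at this stage; (iii) $\zeta_l^{(s-1)}$ picks up a correction in $p^s \c C_0$; and (iv) the new remainder at weights $k > i+s$ has the form $\sum_{k > i+s} p^{k-i} \omega_k^{(s)}$ with $\omega_k^{(s)} \in T^{(k)}$, obtained by absorbing the new contributions into $\omega_k^{(s-1)}$.

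Once this bookkeeping is in place, convergence is automatic from the $p$-adic completeness of $\c O_0$ and of the $p$-adic Banach space $\c C_0$. For fixed $j \le i$ the sequence $(a_j^{(s)})$ is Cauchy in $\c O_0$ because successive corrections lie in $p^s \c O_0$; for fixed $k > i$ the sequence $(\tilde{a}_k^{(s)})_{s \ge k-i}$ is Cauchy in $\c O_0$ because corrections after stage $k-i$ lie in $p^{i+s-k}\c O_0$ with $s \to \infty$; the sequence $(\zeta_l^{(s)})$ is Cauchy in $\c C_0$ because each correction has norm at most $|p|^s$; and the total remainder satisfies $\bigl|\sum_{k > i+s} p^{k-i} \omega_k^{(s)}\bigr| \le |p|^{s+1}$, using that any element of $T^{(k)} \subseteq \c C_0$ has norm at most $1$. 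Passing to the $p$-adic limit $s \to \infty$ produces coefficients $a(\beta,\tilde{\epsilon}_j),\,\tilde{a}(\beta,\tilde{\epsilon}_k) \in \c O_0$ and $\zeta_l(\beta) \in \c C_0$ satisfying the identity (12).

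The main point to verify carefully is that the boundary-term sequence $(\zeta_l^{(s)})$ converges \emph{inside} $\c C_0$ rather than only in the larger space $\c B_0$; this holds because any $\tau \in \c C_0$ has $|\tau| \le 1$, so a correction $p^s \tau$ has norm at most $|p|^s$, and the limit of a Cauchy sequence in $\c C_0$ is again in $\c C_0$ by completeness. Beyond this, the argument is a systematic collection of contributions into the three types of coefficients distinguished in the statement.
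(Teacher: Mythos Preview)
Your proposal is correct and follows essentially the same route as the paper: iterate Lemma~1.9 on the lowest-weight remainder term $\omega_{i+s}^{(s-1)}\in T^{(i+s)}$, record the $p^s$-divisible corrections to $a_j$, $\tilde a_j$, and $\zeta_l$, and pass to the $p$-adic limit. The paper's update formulas $a_k^{(N+1)}=a_k^{(N)}+p^{N+1}\mu_k^{(N+1)}$, $\tilde a_k^{(N+1)}=\tilde a_k^{(N)}+p^{N+1-k+i}\mu_k^{(N+1)}$, $\zeta_l^{(N+1)}=\zeta_l^{(N)}+p^{N+1}\rho_l^{(N+1)}$ are exactly your stage-$s$ corrections (with $s=N+1$), and your explicit remark that the $\zeta_l$-limit stays in $\c C_0$ because $|p^s\tau|\le |p|^s$ for $\tau\in\c C_0$ makes the convergence step slightly more explicit than in the paper.
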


\begin{proof}
Keeping in mind our convention that $\epsilon_k = 0$ for $k>n$, consider for each non-negative integer $N$ the assertion
\begin{equation}
\beta = \sum_{k=0}^i a_k^{(N)} \tilde{\epsilon}_k + \sum_{k=i+1}^{N+i} p^{k-i} \tilde{a}_k^{(N)} \tilde{\epsilon}_k + \sum_{l=1}^n D_l \zeta_l^{(N)} + \sum_{k=N+i+1}^{\infty} p^{k-i} \omega_k^{(N)},  
\end{equation}
with $\{a_k^{(N)} \}_{k=0}^i \cup \{ \tilde{a}_k^{(N)} \}_{k=i+1}^{N+1} \subseteq \c O_0$,  $\{\zeta_l^{(N)}\}_{l=1}^n \subseteq \c C_0$, and $\omega_k^{(N)} \in T^{(k)}$ for all $k \geq N+1+i$. 
The previous lemma begins the induction.  So assume the assertion holds for $N$.  We may also apply the preceding lemma to the term $\omega^{(N)}_{N+i+1}$ with weight $\leq N+i+1$ in the last sum in (13). Then
\begin{equation}
\omega_{N+i+1}^{(N)} = \sum_{k=0}^{N+i+1} \mu_k^{(N+1)} \tilde{\epsilon}_k + \sum_{l=1}^n D_l \rho_l^{(N+1)} + \sum_{k=N+i+2}^{\infty} p^{k-N-i-1} \nu_k^{(N+1)} 
\end{equation}
with $\{\mu_k^{(N+1)} \}_{k=0}^i \subseteq \c O_0$, $\{\rho_l^{(N+1)}\}_{l=1}^n \subseteq \c C_0$, and $\nu_k^{(N+1)} \in T^{(k)}$ for all $k \geq N+i+2$.
Multiplying (14) by $p^{N+1}$ and substituting back into (13) gives us the assertion for $N+1$ with 
\begin{align*} 
a_k^{(N+1)} &= a_k^{(N)} + p^{N+1} \mu_k^{(N+1)} \text{ for $0 \leq k \leq i$,} \\
\tilde{a}_k^{(N+1)} &= \tilde{a}_k^{(N)} + p^{N+1-k+i} \mu_k^{(N+1)} \text{ for $i+1 \leq k \leq N+i$,} \\
\tilde{a}_{N+i+1}^{(N+1)} &= \mu_{N+i+1}^{(N+1)}, \\
\zeta_l^{(N+1)} &= \zeta_l^{(N)} + p^{N+1} \rho_l^{(N+1)}, \\
\omega_k^{(N+1)} &= \omega_k^{(N)} + \nu_k^{(N+1)} \ \text{for all} \ k \geq N+i+2. \\
\end{align*}
We may take limits as $N \rightarrow \infty$, so that
\begin{align*}
a_k &= \lim_{N\to\infty} a_k^{(N)} \ \text{for}\  0 \leq k \leq i, \\
\tilde{a}_k &= \lim_{N\to\infty} \tilde{a}_k^{(N)} \ \text{for} \ k \geq i+1,\\
\zeta_l &= \lim_{N\to\infty} \zeta_l^{(N)} \ \text{for} \ 1 \leq l \leq n.
\end{align*}
It follows then that (12) in the statement of  Lemma 1.10 holds.
\end{proof}
 
\begin{lemma}
Assume $w(u) \leq k$. Then ${\rm ord} \: a(\tilde{u}, \tilde{\epsilon}_k) \geq k - w(u)$.
\end{lemma}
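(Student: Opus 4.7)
The plan is to deduce Lemma 1.11 almost directly from Lemma 1.10, once the notation there is reconciled with that of Theorem 1.3 via the freeness of the cohomology module.

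First, I would observe that $\tilde{u} = \gamma^{w(u)} x^u$ lies in $T^{(w(u))}$ by the very definition of these submodules, so Lemma 1.10 applies to $\beta = \tilde{u}$ with $i = w(u)$. This produces a decomposition
\[ \tilde{u} = \sum_{k=0}^{w(u)} a_k\, \tilde{\epsilon}_k + \sum_{k=w(u)+1}^{\infty} p^{\,k - w(u)}\, \tilde{a}_k\, \tilde{\epsilon}_k + \sum_{l=1}^{n} D_l\, \zeta_l, \]
with $a_k, \tilde{a}_k \in \c O_0$ and $\zeta_l \in \c C_0$. Under the convention $\tilde{\epsilon}_k = 0$ for $k > n$, all but finitely many terms vanish.

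Next, to identify these coefficients with the $a(\tilde{u}, \tilde{\epsilon}_k)$ appearing in Theorem 1.3 (and in equation (10)), I would appeal to uniqueness. Theorem 1.3 asserts that $H^n(\Omega_{\c C_0}^\bullet, \nabla(D))$ is a free $\c O_0$-module of rank $n+1$ with basis $\tilde{\c B} = \{\tilde{\epsilon}_k\}_{k=0}^n$, so the coefficient of each $\tilde{\epsilon}_k$ in any decomposition of $\tilde{u}$ modulo $\sum_l D_l \c C_0$ is uniquely determined by the cohomology class of $\tilde{u}$. Hence the Lemma 1.10 decomposition and the Theorem 1.3 decomposition must agree coefficient by coefficient, yielding
\[ a(\tilde{u}, \tilde{\epsilon}_k) = \begin{cases} a_k \in \c O_0 & \text{if } k \leq w(u), \\ p^{\,k - w(u)}\, \tilde{a}_k \text{ with } \tilde{a}_k \in \c O_0 & \text{if } k > w(u). \end{cases} \]

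Finally, under the hypothesis $w(u) \leq k$ of Lemma 1.11: if $k = w(u)$ then $a(\tilde{u}, \tilde{\epsilon}_k) \in \c O_0$ gives ${\rm ord}\,a(\tilde{u},\tilde{\epsilon}_k) \geq 0 = k - w(u)$; if $k > w(u)$ then the explicit factor $p^{\,k - w(u)}$ gives ${\rm ord}\,a(\tilde{u},\tilde{\epsilon}_k) \geq k - w(u)$. The substantive content has already been packaged into Lemma 1.10, which interleaves the weight filtration with $p$-adic approximation, so there is no real obstacle remaining. The only item requiring a moment of care is invoking the freeness of $H^n$ to guarantee that the two representations of $\tilde{u}$ share the same $\tilde{\epsilon}_k$-coefficients, so that the $p$-adic divisibility visible in Lemma 1.10 transfers to the canonical coefficients $a(\tilde{u}, \tilde{\epsilon}_k)$ used throughout the proof of Theorem 1.6.
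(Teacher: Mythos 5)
Your proof is correct and takes essentially the same route as the paper's: apply Lemma 1.10 with $\beta = \tilde{u}$ and $i = w(u)$, then read off the visible factor of $p^{k - w(u)}$ (or integrality when $k = w(u)$) in the coefficient of $\tilde{\epsilon}_k$. You spell out the appeal to freeness of $H^n(\Omega^{\bullet}_{\c C_0},\nabla(D))$ needed to identify the Lemma 1.10 coefficients with $a(\tilde{u},\tilde{\epsilon}_k)$, a point the paper leaves implicit.
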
 

\begin{proof}
Recall $a(\tilde{u}, \tilde{\epsilon}_k)$ is the coefficient of $\tilde{\epsilon}_k$ when $\tilde{u} = \gamma^{w(u)}x^u$ is expressed in terms of the basis $\tilde{\c B}$ of $H^n(\Omega^{\bullet}_{\c C_0}, \nabla(D))$.  Since $a(\tilde{u}, \tilde{\epsilon}_k) = p^{k- w(u)} \tilde{a}(\tilde{u}, \tilde{\epsilon}_k)$, the result is then immediate from Lemma 1.10.
\end{proof}

We complete now the proof of Theorem 1.6. 
\begin{proof}
We work from the series for $\tilde{A}(\tilde{\epsilon}_j, \tilde{\epsilon}_i)$ in (10) above.  We consider then each summand $A(\tilde{u},\tilde{\epsilon}_i) a(\tilde{u}, \tilde{\epsilon}_j)$.  We know by Theorem 1.4 that in general ${\rm ord} \: A(\tilde{u}, \tilde{\epsilon}_i) \geq w(u)$.  So in the case $w(u) \geq j$ we have the desired estimate
\begin{equation}
{\rm ord} \: A(\tilde{u}, \tilde{\epsilon}_i) a(\tilde{u}, \tilde{\epsilon}_j) \geq w(u) \geq j. 
\end{equation}
If $w(u) < j$, then by Lemma 1.11 above
\begin{equation}
{\rm ord} \: A(\tilde{u}, \tilde{\epsilon}_i) a(\tilde{u}, \tilde{\epsilon}_j) \geq w(u) +(j - (w(u)) = j. 
\end{equation}
So together these prove the first sentence of Theorem 1.6. 
 
Assume now in addition that $j < i$.  Let $u \in \mathbb{Z}^n$, $u \neq \epsilon_i$, with $w(u) \leq i$. Then Theorem~1.5 gives the strict inequality ${\rm ord} \: A(\tilde{u}, \tilde{\epsilon}_i) > w(u)$.  But then in the case  $u \neq \epsilon_i$,  $w(u) \leq i$, the inequalities in (15) and (16) are strict.  If $u=\epsilon_i$, then by Theorem~1.5
\[ {\rm ord}\: A(\tilde{\epsilon}_i,\tilde{\epsilon}_i) = i>j, \]
so the inequality in (15) is strict.  Finally we consider the case of $u \in \mathbb{Z}^n$, $w(u) >i$.  But then, using the hypothesis $i > j$, we see
\[ {\rm ord} \: A(\tilde{u}, \tilde{\epsilon}_i) \geq w(u) > i > j, \]
so that the inequalities in (15) is again strict. This proves the last sentence of Theorem~1.6.  

Note that if $i=j$, then (10) gives
\[ \tilde{A}(\tilde{\epsilon}_j, \tilde{\epsilon}_j) 
 = A(\tilde{\epsilon}_j, \tilde{\epsilon}_j) + \sum_{u \neq \epsilon_j} A(\tilde{u}, \tilde{\epsilon}_j)a(\tilde{u}, \tilde{\epsilon}_j). \]
We have ${\rm ord} \: A(\tilde{u}, \tilde{\epsilon}_j) > w(u)$ for $w(u) \leq j$, ${u} \neq {\epsilon}_j$, by Theorem 1.5, so inequality (16) is strict by Lemma 1.11.  And if $w(u) > j$, then ${\rm ord} \: A(\tilde{u}, \tilde{\epsilon}_j) \geq w(u) > j$ by Theorem 1.4.  Theorem 1.5 then implies the second sentence of Theorem 1.6, completing the proof of Theorem~1.6.
\end{proof}

\section{Dual Theory}\label{S: Dual}

    There is some interest and mathematical value in developing the dual theory and deformation theory for the family of hyperkloosterman sums using the more complicated splitting function $\theta_{\infty}$ which has however better $p$-adic estimates. From a practical standpoint, one useful goal here is to prove the functional equation for hyperkloosterman sums without any characteristic restrictions. We view the following work as useful steps in this direction.

Our construction of a dual space for $H^n(\Omega_{\c C_0}^\bullet, \nabla(D))$ follows closely the work of Serre\cite{Se} and Dwork\cite{D2}.  A similar construction also appears in \cite{AS1} and \cite{AS2}.  We consider the space
\[ \c C_0^* = \bigg\{ \xi^* = \sum_{u \in \bb Z^n}\xi^*(u)\gamma^{-w(u)}x^{-u} \mid \text{$\xi^*(u) \in \c O_0$ for all $u$} \bigg\}. \]
Then we may define a pairing
\[ \left\langle, \right\rangle: \c C_0 \times \c C^*_0 \rightarrow \c O_0 \]
as follows.  Let $\xi = \sum_{u \in \bb Z^n}\xi(u)\gamma^{w(u)}x^u \in \c C_0$ and $\xi^* = \sum_{u \in \bb Z^n}\xi^*(u)\gamma^{-w(u)}x^{-u}$.  Then 
\[ \left\langle \xi,  \xi^* \right\rangle = \sum_{u \in \bb Z^n}\xi(u)\xi^*(u). \]
Since $\xi(u) \rightarrow 0$ as $u \rightarrow \infty$ and $\xi^*(u) \in \c O_0$, it follows that the product above converges in $\c O_0$. Since both $\c C_0$ and $\c C^*_0$ contain monomials, the pairing is nondegenerate in both arguments.  

We consider next operators $\alpha_0^*$ and $D^*_j$ acting on $\c C^*_0$ adjoint to the operators $\alpha_0$ and $D_j$ acting on $\c C_0$ defined in the previous section.
Since 
\[ \left\langle E_j\gamma^{w(u)}x^u,  \gamma^{-w(v)}x^{-v} \right\rangle = u_j\left\langle \gamma^{w(u)}x^u,  \gamma^{-w(v)}x^{-v} \right\rangle, \]
this pairing equals  $u_j$ if $u=v$ and $0$ otherwise.  On the other hand, 
\[ \left\langle \gamma^{w(u)}x^u, E_j \gamma^{-w(v)}x^{-v} \right\rangle = -v_j\left\langle \gamma^{w(u)}x^u,  \gamma^{-w(v)}x^{-v} \right\rangle, \]
which equals $-u_j$ if $u=v$ and $0$ otherwise.  It follows that
\[ \left\langle E_j\xi, \xi^*  \right\rangle = \left\langle \xi, -E_j \xi^* \right\rangle. \]
If $\Phi$ is the operator which, on monomials in the $x$-variables, acts as $\Phi(x^u) = x^{pu}$, then
\[ \left\langle \psi(\xi), \xi^* \right\rangle = \left\langle \xi, \Phi(\xi^*) \right\rangle. \]
Now define $\alpha_1^* = F_{\lambda}(x) \circ \Phi \circ \sigma$ and $\alpha_0^* = (\alpha_1^*)^a$.
Then 
\begin{theorem}
The maps $\alpha_1^*$ and $\alpha_0^*$ act on $\c C^*_0$ and are adjoint (respectively) to the operators  $\alpha_1$ and $\alpha_0$ acting on $\c C_0$, so that in particular,
\[ \left\langle \alpha_0(\xi), \xi^* \right\rangle = \left\langle \xi, \alpha_0^*(\xi^*) \right\rangle. \]
\end{theorem}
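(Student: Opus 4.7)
My plan is to factor both $\alpha_1$ and $\alpha_1^*$ into three pieces and verify the adjointness of each piece separately, then compose and iterate to handle $\alpha_0$. Concretely, $\alpha_1 = \sigma^{-1} \circ \psi \circ F_\lambda(x)$ and $\alpha_1^* = F_\lambda(x) \circ \Phi \circ \sigma$ pair up naturally: multiplication by $F_\lambda$ is ``self-adjoint'' in an appropriate formal sense, $\psi$ and $\Phi$ are already shown to be adjoint in the excerpt, and $\sigma^{-1}$ is adjoint to $\sigma$ in a semilinear sense.

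Before touching the adjoint computation, I would first check that $\alpha_1^*$ genuinely maps $\c C_0^*$ to itself. The maps $\sigma$ (acting on coefficients, fixing $\gamma$) and $\Phi$ (sending $x^{-v} \mapsto x^{-pv}$) clearly preserve $\c C_0^*$. The only subtle step is multiplication by $F_\lambda = \sum B(u) x^u$, because elements of $\c C_0^*$ may have arbitrary coefficients in $\c O_0$ (no decay), so naive multiplication need not converge. The saving feature is that $\Phi\sigma(\xi^*)$ is supported on $x^{-w}$ with $w \in p\bb Z^n$; the coefficient of $\gamma^{-w(b)} x^{-b}$ in $F_\lambda \cdot \Phi\sigma(\xi^*)$ takes the form $\sum_v B(pv-b)\,\sigma(\xi^*(v))\,\gamma^{w(b)-w(v)}$, and combining the estimate $\mathrm{ord}\,B(u) \geq w(u)/(p-1)$ from inequality (7) with subadditivity of $w$ (which yields $w(pv-b) \geq p w(v) - w(b)$) shows the $v$-th term has ord at least $w(v)$. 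This tends to $\infty$ as $w(v) \to \infty$, giving convergence in $\c O_0$.

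With well-definedness in hand, the adjointness of each piece is straightforward. A direct expansion yields $\langle \sigma^{-1}(\eta), \xi^* \rangle = \sigma^{-1}\langle \eta, \sigma(\xi^*) \rangle$, since $\sigma$ fixes $\gamma$. For multiplication by $F_\lambda$, writing both $\langle F_\lambda \eta, \eta^* \rangle$ and $\langle \eta, F_\lambda \eta^* \rangle$ as double sums in the indices shows that formally they agree; when $\eta^* = \Phi\sigma(\xi^*)$, convergence of both sides is secured by the same estimate above. Combining with the given $\psi$-$\Phi$ adjointness, the chain
\[ \langle \alpha_1(\xi), \xi^* \rangle = \sigma^{-1}\langle \psi(F_\lambda \xi), \sigma(\xi^*) \rangle = \sigma^{-1}\langle F_\lambda \xi, \Phi\sigma(\xi^*) \rangle = \sigma^{-1}\langle \xi, \alpha_1^*(\xi^*) \rangle \]
gives the semilinear adjoint for $\alpha_1$.

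To lift this to $\alpha_0 = \alpha_1^a$, I would induct on $k$ to obtain $\langle \alpha_1^k(\xi), \xi^* \rangle = \sigma^{-k}\langle \xi, (\alpha_1^*)^k(\xi^*) \rangle$. Taking $k = a$ and using that $\sigma$ has order $a$ on $\Omega_0 = \bb Q_q(\zeta_p)$ (it generates $\mathrm{Gal}(\bb Q_q/\bb Q_p)$ of degree $a$ and fixes $\zeta_p$), the semilinear twist disappears: $\sigma^{-a}$ acts as the identity, yielding $\langle \alpha_0(\xi), \xi^* \rangle = \langle \xi, \alpha_0^*(\xi^*) \rangle$. The main obstacle throughout is the convergence step for multiplication by $F_\lambda$ on $\c C_0^*$; without the $p$-stretching that $\Phi$ provides, the product would diverge, so the specific ordering $F_\lambda \circ \Phi \circ \sigma$ in the definition of $\alpha_1^*$ is essential for the operator to even make sense.
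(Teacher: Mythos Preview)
Your proposal is correct and follows essentially the same route as the paper: both isolate the only nontrivial point---convergence of the coefficient sum when multiplying $\Phi\sigma(\xi^*)$ by $F_\lambda$---and establish it via the estimate ${\rm ord}\,B(u)\geq w(u)/(p-1)$ combined with the subadditivity/homogeneity inequality $w(pv-b)\geq pw(v)-w(b)$, yielding term-wise ord $\geq w(v)$. The paper then declares the adjointness itself ``straightforward,'' whereas you spell out the three-factor decomposition and the semilinear-to-linear passage $\alpha_1^a=\alpha_0$ via $\sigma^a={\rm id}$; this extra care is fine but not a different method.
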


\begin{proof}
This is essentially \cite[Proposition 4.1]{AS}. Formally 
\begin{multline*}
 \alpha_1^*\bigg(\sum_{u \in \bb Z^n}\xi^*(u)\gamma^{-w(u)}x^{-u}\bigg) = \\
\sum_{\tau \in \bb Z^n} \bigg(\sum_{v-qu=-\tau}(B(v)\xi^*(u))\gamma^{-w(u) + w(\tau)}\bigg)\gamma^{-w(\tau)}x^{-\tau}. 
\end{multline*}
Since $\text{ord} \: B(u) \geq \frac{w(u)}{p-1}$ we write $B(u) = \gamma^{w(u)}\tilde{B}(u)$ with $\tilde{B}(u) \in \c O_0$.  Then $\sum \sum_{v-qu=-\tau}(\tilde{B}(v)\xi^*(u))\gamma^{-w(u) + w(v)+ w(\tau)}$ converges in $\c O_0$ because $v-qu = -\tau$ implies $(q-1)w(u) \leq w(v) + w(\tau) - w(u)$.  This shows the coefficient of $\gamma^{-w(\tau)}x^{-\tau}$ in the product above is defined and lies in $\c O_0$ and $\alpha_0^*$ acts on $\c C^*_0$. The rest of the assertion is straightforward.
\end{proof}

Similarly, $D_j^* = -E_j + \gamma H_j$ acts on $\c C^*_0$ and is adjoint to $D_j $ acting on $\c C_0$.

Recall the series 
\[ \hat{\theta}_1(t)=\prod_{j=1}^{\infty}\text{exp} (\gamma_jt^{p^j}) =: \sum_{i=0}^{\infty}\frac{\hat{\theta}_{1,i}}{i!}(\gamma t)^i \]
as described in \cite[Equation (3.9)]{AS1}.  Then by \cite[Equation (3.10)]{AS1} 
\[ \text{ord} \: \hat{\theta}_{1,i} \geq \frac{i(p-1)}{p}. \]
Define a series $\hat{\theta}_1(x)$ (or $\hat{\theta}_1(\lambda, x)$ if more precision is required) by the formula
\[ \hat{\theta}_1(x) = \hat{\theta}_1\bigg(\frac{\lambda}{x_1x_2\cdots x_n}\bigg) \prod_{j=1}^n\hat{\theta}_1(x_j). \]
It follows then that 
\[ \hat{\theta}_1(x) = \sum_{u \in \bb Z^n}\hat{\theta}_1(u)\gamma^{w(u)}x^u \]
with 
\[ \hat{\theta}_1(u) = \sum \bigg(\prod_{j=0}^n \frac{\hat{\theta}_{1,k_j}}{k_j!}\bigg)\lambda^{k_0}, \]
the sum on the right-hand side running over $(k_0,k_1,\dots,k_n) \in \bb Z_{\geq 0}^{n+1}$ satisfying $k_i - k_0 = u_i$ for all $1 \leq i \leq n$.  As a consequence $k_0 \geq m(u)$ and 
\[ \text{ord} \: \bigg(\prod_{j=0}^n \frac{\hat{\theta}_{1,k_j}}{k_j!}\bigg) \geq w(u)(\frac{p-1}{p}-\frac{1}{p-1}), \]
so that for any $\lambda \in \c O_0$ we have
\[ \text{ord} \, \hat{\theta}_1(u) \geq w(u)( \frac{p-1}{p}-\frac{1}{p-1}). \]

In a similar vein,  the reciprocal series 
\[ \hat{\theta}_1^{-1}(t) = \prod_{j=1}^{\infty}\text{exp} (-\gamma_jt^{p^j}) =: \sum_{i=0}^{\infty}\frac{\hat{\theta}'_{1,i}}{i!}(\gamma t)^i, \]
satisfies $\text{ord} \: \hat{\theta}'_{1,i} \geq \frac{i(p-1)}{p}$.  Define 
\[ \hat\theta_1^{-1}(x) =  \hat{\theta}_1^{-1}\bigg(\frac{\lambda}{x_1x_2\cdots x_n}\bigg) \prod_{j=1}^n\hat{\theta}^{-1}_1(x_j) = \sum_{u \in \bb Z^n} \hat{\theta}^{-1}_1(u)x^u. \]
As above, $\text{ord} \: \hat\theta^{-1}_1(u) \geq \ w(u)(\frac{p-1}{p}-\frac{1}{p-1})$.  

\begin{theorem} 
Multiplication by $\hat{\theta}_1(x)$, respectively by $\hat\theta^{-1}_1(x)$, is an isomorphism of $\c C^*_0$.  For $p \neq 2$, $\hat{\theta}^{(1)}(x)$ and its inverse both belong to $\c C_0$ so that multiplication by $\hat{\theta}^{(1)}(x)$ is an isomorphism on $\c C_0$.  
\end{theorem}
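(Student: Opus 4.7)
The plan is to break the assertion into two statements: that multiplication by $\hat{\theta}_1(x)$ (respectively $\hat{\theta}_1^{-1}(x)$) gives a well-defined continuous $\mathcal{O}_0$-linear endomorphism of the appropriate space, and that the two maps are mutually inverse. The inverse relation is automatic from the formal identity $\hat{\theta}_1(t)\,\hat{\theta}_1^{-1}(t)=1$, which propagates to $\hat{\theta}_1(x)\,\hat{\theta}_1^{-1}(x)=1$ in the ring of formal Laurent series, so once the endomorphism statements are established, the isomorphism conclusions come for free.

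For the $\mathcal{C}_0$-statement under $p\neq 2$, I would read off from the displayed bound
$\operatorname{ord}\,\hat{\theta}_1(u)\geq w(u)\bigl(\tfrac{p-1}{p}-\tfrac{1}{p-1}\bigr)$
(and the analogous bound for $\hat{\theta}_1^{-1}(u)$) preceding the theorem: the coefficient equals $(p^2-3p+1)/(p(p-1))$, which is strictly positive exactly when $p\geq 3$. So both $\hat{\theta}_1(u)$ and $\hat{\theta}_1^{-1}(u)$ lie in $\mathcal{O}_0$ and decay to $0$ as $u\to\infty$ (since $w(u)\to\infty$), placing both series in $\mathcal{C}_0$. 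Because $\mathcal{C}_0$ is an $\mathcal{O}_0$-algebra, multiplication by either element is a continuous $\mathcal{O}_0$-linear endomorphism, and mutual inversion finishes this case.

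For the $\mathcal{C}_0^*$-statement, given $\xi^* = \sum \xi^*(u)\gamma^{-w(u)}x^{-u}\in\mathcal{C}_0^*$, the plan is to expand $\hat{\theta}_1(x)\cdot\xi^*$ formally and group by the target monomial $\gamma^{-w(\tau)}x^{-\tau}$, producing the coefficient
\[
 \eta^*(\tau) \;=\; \sum_{v\in\mathbb{Z}^n}\hat{\theta}_1(v)\,\xi^*(v+\tau)\,\gamma^{w(v)-w(v+\tau)+w(\tau)}.
\]
To show $\eta^*(\tau)\in\mathcal{O}_0$, combine the ord bound on $\hat{\theta}_1(v)$, the hypothesis $\operatorname{ord}\,\xi^*(v+\tau)\geq 0$, and the weight inequality $w(v+\tau)\leq w(v)+w(\tau)$ (which makes the exponent of $\gamma$ nonnegative). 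Each summand then has $\operatorname{ord}\geq w(v)\bigl(\tfrac{p-1}{p}-\tfrac{1}{p-1}\bigr)$, a quantity that is nonnegative and tends to $+\infty$ with $w(v)$ whenever $p\neq 2$. The same computation applied to $\hat{\theta}_1^{-1}$ supplies the inverse endomorphism.

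The main obstacle I foresee is the case $p=2$ in the $\mathcal{C}_0^*$-statement: then the coefficient $(p^2-3p+1)/(p(p-1))$ equals $-\tfrac12$, and the term-by-term bound goes the wrong way, so a direct application of the estimate before the theorem does not give convergence of $\eta^*(\tau)$. I expect the refinement to exploit additional structure of $\hat{\theta}_1(t)=\prod_{j\geq 1}\exp(\gamma_j t^{p^j})$: for $p=2$ every factor is a power series in $t^{2^j}$ with $j\geq 1$, so $\hat{\theta}_1(t)$ is a power series in $t^2$, and the nonvanishing coefficients have ord strictly better than the generic estimate predicts. Organizing the sum defining $\eta^*(\tau)$ to track these extra $\gamma$-factors (or to cancel nominally divergent contributions against the $\gamma^{w(\tau)}$ gain) should be what saves the $p=2$ case, and this is where the proof will require the most care.
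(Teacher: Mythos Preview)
Your approach is essentially the paper's own: expand $\hat{\theta}_1(x)\xi^*(x)$ formally, group by target monomial, use the triangle inequality $w(v+\omega)\leq w(v)+w(\omega)$ to see that the exponent of $\gamma$ is nonnegative, and invoke the pre-theorem estimate on $\operatorname{ord}\hat{\theta}_1(v)$ to get termwise decay. The paper's proof is literally this computation, followed by the remark that the inverse is multiplication by $\hat{\theta}_1^{-1}(x)$, and that ``a similar argument'' handles the $\mathcal{C}_0$-statement.

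The one substantive discrepancy is the per-term bound. The paper asserts
\[
\operatorname{ord}\bigl(\hat{\theta}_1(v)\,\xi^*(u)\,\gamma^{\,w(v)-w(u)+w(\omega)}\bigr)\;\geq\; w(v)\,\frac{p-1}{p},
\]
which is positive for every prime and so disposes of $p=2$ at once; you obtain instead $w(v)\bigl(\tfrac{p-1}{p}-\tfrac{1}{p-1}\bigr)$. From the ingredients actually recorded---namely $\operatorname{ord}\hat{\theta}_1(v)\geq w(v)\bigl(\tfrac{p-1}{p}-\tfrac{1}{p-1}\bigr)$, $\xi^*(u)\in\mathcal{O}_0$, and $w(v)-w(u)+w(\omega)\geq 0$---one only gets \emph{your} bound; the paper does not indicate how the extra $\tfrac{w(v)}{p-1}$ is recovered. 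So your worry about $p=2$ is legitimately located, and the paper's proof does not visibly address it either. That said, your proposed rescue (that $\hat{\theta}_1(t)$ is a series in $t^p$) does not by itself repair the estimate: for $p=2$ one computes, for instance, that the coefficient of $x_1^4$ in $\hat{\theta}_1(x)$ has $\operatorname{ord}=3$ while $w(4e_1)=4$, so $\hat{\theta}_1(4e_1)$ has $\operatorname{ord}=-1$, and taking $\xi^*=\gamma^{-4}x_1^{-4}\in\mathcal{C}_0^*$ already produces a product whose constant term lies outside $\mathcal{O}_0$.

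In short: your argument matches the paper's for $p\neq 2$, and for $p=2$ neither your sketch nor the paper's displayed inequality closes the gap.
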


\begin{proof}
Let $\xi^*(x) = \sum_{u \in \bb Z^n}\xi^*(u)\gamma^{-w(u)}x^{-u} \in \c C^*_0$ with $\xi^*(u) \in \c O_0$.  We may write formally 
\[ \hat{\theta}_1(x)\xi^*(x) = \sum_{\omega \in \bb Z^n} \bigg(\sum_{v-u=-\omega}\hat\theta_1(v) \xi^*(u) \gamma^{w(v)-w(u) + w(\omega)}\bigg)\gamma^{-w(\omega)}x^{-\omega}. \]
Since $v+ \omega = u$, we have $w(u) \leq w(v) + w(\omega)$.  It follows then that 
\[ \text{ord} \: \hat{\theta}_1(v) \xi^*(u) \gamma^{w(v) -w(u) + w(\omega)} \geq w(v)\bigg(\frac{p-1}{p}\bigg). \]
This estimate shows that multiplication by $\hat{\theta}_1(x)$ is defined on $\c C^*_0$, and clearly since the inverse is multiplication by $\hat{\theta}_1^{-1}(x)$ the map is an isomorphism.  A similar argument shows the last assertion as well.
\end{proof}

Let 
\[ D^{(1)*}_j = -E_j + \gamma H^{(1)}_j = -\exp(\gamma H^{(1)}(x)) \circ x_j\frac{\partial}{\partial x_j} \circ \exp(-\gamma H^{(1)}(x)), \]
where $H^{(1)} =  K(x)$ and 
\[ H^{(1)}_j = x_j \frac{\partial H^{(1)}(x)}{\partial x_j} = x_j - \frac{\lambda}{x_1x_2\cdots x_n}. \]
Let $\bold{K} = \bigcap_{j=1}^n \{ \ker D_j^*\mid \c C^*_0 \}$ and $\bold K^{(1)}= \bigcap_{j=1}^n \{ \ker D^{(1)*}_j\mid \c C^*_0 \}$.  Then 
\[ D_j^* = \hat{\theta}_1(x) \circ D^{(1)*}_j \circ \hat{\theta}_1^{-1}(x) = D_j^{(1)} + \sum_{m=1}^{\infty}\gamma_mp^mH^{(1)}_j(x^{p^m}). \]
The next result follows immediately from the preceding theorem. 
\begin{lemma}
The isomorphism $\rho: \c C^*_0 \rightarrow \c C^*_0 $ defined by $\rho (\xi^*(x)) = \hat{\theta}_1(x) \xi^*(x)$
restricts to an isomorphism of subspaces of $\c C^*_0$,  $\rho: \bold K^{(1)} \rightarrow \bold K$.
\end{lemma}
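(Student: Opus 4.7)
The plan is to exploit the intertwining identity $D_j^* = \hat{\theta}_1(x) \circ D_j^{(1)*} \circ \hat{\theta}_1^{-1}(x)$ displayed just above the lemma, which rewrites the desired conjugation statement as an immediate compatibility between $\rho$ and the kernels defining $\bold K^{(1)}$ and $\bold K$.

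First, I would confirm that this intertwining is a genuine equality of operators on $\c C^*_0$. Theorem 2.2 guarantees that multiplication by $\hat{\theta}_1(x)$ and by its inverse both preserve $\c C^*_0$, so the right-hand composition is well defined, and the identity itself reduces via the product rule to the equality
\[ D_j^* - D_j^{(1)*} = \sum_{m=1}^{\infty} \gamma_m p^m H_j^{(1)}(x^{p^m}), \]
which is precisely what was recorded in the display preceding the lemma.

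Given the intertwining, the argument is one line: for $\xi^* \in \bold K^{(1)}$ and any $j$,
\[ D_j^*(\rho(\xi^*)) = \hat{\theta}_1(x)\, D_j^{(1)*}(\hat{\theta}_1^{-1}(x)\, \hat{\theta}_1(x)\, \xi^*) = \hat{\theta}_1(x)\, D_j^{(1)*}(\xi^*) = 0, \]
so $\rho(\bold K^{(1)}) \subseteq \bold K$. The symmetric identity $D_j^{(1)*} = \hat{\theta}_1^{-1}(x) \circ D_j^* \circ \hat{\theta}_1(x)$, combined with the analogous calculation applied to $\rho^{-1}$, gives $\rho^{-1}(\bold K) \subseteq \bold K^{(1)}$. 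Since Theorem 2.2 already provides that $\rho$ is an isomorphism of $\c C^*_0$, these two inclusions together yield that $\rho$ restricts to an isomorphism $\bold K^{(1)} \to \bold K$, as asserted.

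The only point that requires any care, and hence what counts as the main obstacle, is verifying the intertwining identity as a convergent equality in $\c C^*_0$ rather than a merely formal one. Concretely, one must see that the logarithmic-derivative contribution $-\hat{\theta}_1(x) \cdot E_j(\hat{\theta}_1^{-1}(x))$ genuinely equals the series $\sum_{m \geq 1} \gamma_m p^m H_j^{(1)}(x^{p^m})$ term-by-term on $\c C^*_0$. This is built into the construction of $\hat{\theta}_1$ from the Artin--Hasse factorization of $\theta_\infty$, and the relevant convergence is controlled by the estimate $\text{ord}\,\hat{\theta}_1(u) \geq w(u)\bigl((p-1)/p - 1/(p-1)\bigr)$ derived earlier in the section. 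With this verification in hand, no further analysis is needed.
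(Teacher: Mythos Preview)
Your proposal is correct and matches the paper's own argument: the paper simply states that the lemma ``follows immediately from the preceding theorem,'' relying on Theorem~2.2 for the fact that $\rho$ is an isomorphism of $\c C^*_0$ and on the displayed intertwining identity $D_j^* = \hat{\theta}_1(x) \circ D_j^{(1)*} \circ \hat{\theta}_1^{-1}(x)$ to carry kernels to kernels. Your write-up just makes this reasoning explicit, including the routine check that the intertwining is a convergent (not merely formal) identity on $\c C^*_0$.
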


It seems worthwhile at this point to indicate how a trace formula also holds with a Frobenius map acting on the space $\mathbf{K}^{(1)}$.  To this end define $\alpha_1^{(1)*} : \mathbf{K}^{(1)} \rightarrow \mathbf{K}^{(1)}$ via $\alpha_1^{(1)*} = \rho^{-1} \circ \alpha_1^* \circ \rho$.  Clearly, if $\alpha_0^{(1)*} =( {\alpha_1^{(1)*}})^a$, then $\det(I - \alpha_0^{(1)*}T) = \det(I - \alpha_0^*T)$.

Our goal now is to prove that $\bold K$ is the dual space to $H^n(\Omega^{\bullet}_{\c C_0}, \nabla(D))$.  An important part of this is to show that the  dimension of $\bold K$ over $\Omega_0$ is $n+1$.  By the preceding results, it suffices to show $\dim_{\Omega_0} \bold K^{(1)} = n+1$.  We work at first algebraically.  We view $K_n(\Lambda, x) \in \bb Z[\Lambda, x_1^{\pm 1},x_2^{\pm},\dots,x_n^{\pm 1}]$.  Consider the free $\bb Q[\Lambda]$-subalgebra $\c L$ of the ring of Laurent polynomials $\bb Q[\Lambda, x_1^{\pm 1},\dots,x_n^{\pm 1}]$ with free basis $\{ \Lambda^{m(u)}x^u \}_{u \in \bb Z^n}$.  Then $\c L$ is filtered by weight and we let $\bar{\c L}$ be the associated graded.  The operators \[ \c D^{(1)}_j = E_j + x_j \frac{\partial K(x)}{\partial x_j} = E_j + x_j - \frac{\Lambda}{x_1x_2\cdots x_n} \]
act on $\c L$.  It follows in a manner altogether similar to Theorem 1.1 above that
\[ \bar{\c L} = \c V \oplus \sum_{j=1}^n \bigg(x_j - \frac{\Lambda}{x_1x_2\cdots x_n}\bigg)\bar{\c L} \]
where $\c V$ is the $\bb Q[\Lambda]$-span of the basis $\c B (= \{ \epsilon_j \}_{j=0}^n)$ in $\bar{\c L}$.
It follows then, just as in Theorem 1.2, that
\begin{equation}
\c L = \c V \oplus \sum_{j=1}^n \c D_j \c L.  
\end{equation}

Now let $\c L^*$ denote the $\mathbb{Q}[\Lambda]$-module of formal (doubly-infinte) Laurent series in the monomials $\{ \Lambda^{-m(u)}x^{-u} \}_{u \in \bb Z^n}$ with coefficients in $\bb Q[\Lambda]$, i.~e.,
\[ \c L^* = \bigg\{ \xi^* = \sum_{u \in \bb Z^n} \xi^*(u)\Lambda^{-m(u)}x^{-u} \mid \xi^*(u) \in \bb Q[\Lambda] \bigg\}.
\]
Then the pairing
\[ \left\langle\,,\right\rangle: \c L \times \c L^* \rightarrow \bb Q[\Lambda] \]
is defined by 
\[ \left\langle \xi, \xi^* \right\rangle = \text{the coefficient of $1 (=x^0)$ in the product $\xi\xi^*$}. \]
This pairing is non-degenerate in each coordinate and so defines an injective map $\c L^* \rightarrow \hat{\c L}$, where $\hat{\c L}$ is the $\bb Q[\Lambda]$-dual of $\c L$ and the map takes $\xi^* \in \c L^*$ to $\Phi_{\xi^*} \in \hat{\c L}$ where $\Phi_{\xi^*}(\xi) = \left\langle \xi, \xi^* \right\rangle $  for each $\xi \in \c L$.  In fact, it is an isomorphism: if $\phi \in \hat{\c L}$, then the element $\xi^* = \sum \phi(u) \Lambda^{-m(u)}x^{-u} \in \c L^*$ satisfies $\phi = \Phi_{\xi^*}$.  The pairing above induces a pairing
\[ \left\langle\,, \right\rangle: \c W^{(1)} \times \c K^{(1)} \rightarrow \bb Q[\Lambda], \]
where $\c W^{(1)} = \frac{\c L}{\sum_{j=1}^n \c D_j \c L}$ and $\c K^{(1)} = \bigcap_{1 \leq j \leq n} \{\text{ker}(\c D^*_j\mid\c L^*)\}$.  It follows just as before that the pairing is non-degenerate in the second coordinate so that the map
\[ \c K^{(1)} \rightarrow \hat{\c W}^{(1)} \]
is injective (where $\hat{\c W}^{(1)}$ denotes the $\bb Q[\Lambda]$-dual of $\c W^{(1)}$).  It is also onto since an arbitrary element $\bar{\phi} \in \hat{\c W}^{(1)}$ which say takes $\epsilon_i \in \c B$ to $\gamma_i \in \bb Q[\Lambda]$ can be realized as the map $\phi \in \hat{\c L}$  taking $\epsilon_i$ to $\gamma_i$ and taking $\sum_{j=1}^n \c D_j \c L$ to $0$, using (17) above.  But then we can define $\xi^* = \sum\phi(u) \Lambda^{-m(u)}x^{-u} \in \c L^*$ as above with $\phi=\Phi_{\xi^*}$.  Then $\phi \in \c K^{(1)}$, using the adjointness of the operators $\c D_j$ and~$\c D^*_j$.  Thus the map $\c K^{(1)} \rightarrow \hat{\c W}^{(1)}$ above is an isomorphism of $\bb Q[\Lambda]$-modules, and $\c K^{(1)}$ is a free $\bb Q[\Lambda]$-module of rank $n+1$.  

If we replace $x_i$ by $\gamma x_i$ for all $1 \leq i \leq n$ and $\Lambda$ by $\gamma^{n+1} \lambda$ then $\Lambda^{-m(u)} x^{-u}$ becomes $\lambda^{-m(u)}\gamma^{-w(u)}x^{-u}$ and $\c L^*$ is mapped to $\c C^*_0$.  In particular, if we define $\c D^{(1)*}_i = -E_i + x_i\frac{\partial K}{\partial x_i}$, then $\c D^{(1)*}_i$ becomes $D^{(1)*}_i $ after this substitution.  

Let $ \epsilon^*_k(\Lambda,x) = \sum_{u \in \bb Z^n} \epsilon^*_k(u)\Lambda^{-m(u)}x^{-u} \in \c L^*$ satisfy $\c D_j^{(1)*} (\epsilon_k^*) = 0$ for all $j$ and $\epsilon^*_k(\epsilon_i) = \delta_{ik}$ for $1 \leq k \leq n$, where $\delta_{ik} =1$ if $i=k$ and $0$ otherwise.  Then an easy but  tedious inductive argument shows that the recursions induced on the coefficients  $\{ \epsilon^*_k(u)\}$ of each such $\epsilon^*_k$ with the given initial data determine every coefficient.  The recursions furthermore imply that $\tilde{\epsilon}^*_k = \epsilon^*_k(\gamma^{n+1}\lambda, \gamma x)$ belongs to $\c C_0^*$ so that $\c B^{(1)*} =\{ \tilde{\epsilon}^*_k \}_{k=1}^n$ is the dual basis in $\mathbf{K}^{(1)}$ to $\c B$ and $\bold K^{(1)} $ has dimension $n+1$ over $\Omega_0$.  Finally this implies $\bold K$ is the dual space of $H^n(\Omega^{\bullet}_{\c C_0}, \nabla(D))$. The following result summarizes the work above.
\begin{theorem}
The subspace $\bold K \subseteq \c C_0^*$ is $(n+1)$-dimensional over $\Omega_0$ and is dual to $H^n(\Omega^{\bullet}_{\c C_0}, \nabla(D))$ via the pairing above.
\end{theorem}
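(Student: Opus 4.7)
The plan is to reduce the problem to an algebraic computation over $\bb Q[\Lambda]$ and then transport the answer back to $\c C_0^*$ via the substitution $x_i \mapsto \gamma x_i$, $\Lambda \mapsto \gamma^{n+1}\lambda$, using Lemma~2.3 to bridge $\b K$ and $\b K^{(1)}$. First I would invoke Lemma~2.3: since $\rho$ is an isomorphism of $\c C_0^*$ carrying $\b K^{(1)}$ onto $\b K$, it suffices to prove $\dim_{\Omega_0} \b K^{(1)} = n+1$ and to exhibit a basis dual to $\tilde{\c B}$ under the pairing.

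The algebraic backbone is the decomposition (17), $\c L = \c V \oplus \sum_{j=1}^n \c D_j \c L$, which follows from Theorem 1.2 by passing to the associated graded and lifting recursively (this mirrors the argument we already used to prove Theorem 1.3). From this one gets an isomorphism $\c K^{(1)} \cong \hat{\c W}^{(1)}$ via the non-degenerate pairing $\langle\,,\rangle: \c L \times \c L^* \to \bb Q[\Lambda]$: injectivity is immediate from non-degeneracy in the second coordinate, and surjectivity follows by extending any $\bar\phi \in \hat{\c W}^{(1)}$ to $\phi \in \hat{\c L}$ (sending $\sum \c D_j \c L$ to zero via (17)) and realizing $\phi = \Phi_{\xi^*}$ with $\xi^* = \sum \phi(u) \Lambda^{-m(u)} x^{-u}$. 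The adjointness $\langle \c D_j \xi, \xi^*\rangle = \langle \xi, \c D_j^* \xi^*\rangle$ then forces $\xi^* \in \c K^{(1)}$. Hence $\c K^{(1)}$ is free of rank $n+1$ over $\bb Q[\Lambda]$, with a canonical basis $\{\epsilon_k^*(\Lambda,x)\}_{k=0}^n$ characterized by $\c D_j^{(1)*}\epsilon_k^* = 0$ and $\epsilon_k^*(\epsilon_i) = \delta_{ik}$.

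Next I would track the change of variables. Under $x_i \mapsto \gamma x_i$ and $\Lambda \mapsto \gamma^{n+1}\lambda$, the monomial $\Lambda^{-m(u)} x^{-u}$ becomes $\lambda^{-m(u)}\gamma^{-w(u)} x^{-u}$, the operator $\c D_j^{(1)*}$ becomes $D_j^{(1)*}$, and $\c L^*$ maps into $\c C_0^*$. Setting $\tilde{\epsilon}_k^* = \epsilon_k^*(\gamma^{n+1}\lambda, \gamma x)$, the recursions forced by $\c D_j^{(1)*}\epsilon_k^* = 0$ determine the coefficients of $\epsilon_k^*(u)$ uniquely from the initial data on $\c B$, and one reads off from those recursions that the $\gamma$-powers present in the substitution are enough to absorb any denominators that appear, so that $\tilde{\epsilon}_k^* \in \c C_0^*$. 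This integrality check is the only point where something can go wrong, and will be the main technical obstacle: one must verify that the recursion does not accumulate denominators faster than $\gamma^{w(u)}$ grows — this is exactly where the choice of splitting function $\theta_\infty$ and the weight $w$ are calibrated to match.

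Finally, having $\c B^{(1)*} = \{\tilde{\epsilon}_k^*\}_{k=0}^n \subseteq \b K^{(1)}$ dual to $\tilde{\c B}$, applying $\rho$ produces the dual basis $\{\rho(\tilde{\epsilon}_k^*)\}_{k=0}^n$ of $\b K$, establishing $\dim_{\Omega_0}\b K = n+1$. To conclude that $\b K$ is the dual space of $H^n(\Omega_{\c C_0}^\bullet,\nabla(D))$, I would check that the pairing $\langle\,,\rangle: \c C_0 \times \c C_0^* \to \c O_0$ descends to a pairing on $H^n(\Omega_{\c C_0}^\bullet, \nabla(D)) \times \b K$: if $\xi^* \in \b K$, then $\langle D_j \zeta, \xi^*\rangle = \langle \zeta, D_j^*\xi^*\rangle = 0$, so $\sum_j D_j \c C_0$ pairs trivially with $\b K$. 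The induced pairing on the $(n+1)$-dimensional spaces $H^n$ (by Theorem~1.3) and $\b K$ sends $(\tilde{\epsilon}_i, \rho(\tilde{\epsilon}_k^*))$ to a matrix which, by construction of $\tilde{\epsilon}_k^*$ and the explicit form of $\rho$, is upper-triangular with units on the diagonal modulo $\gamma$, hence invertible. This gives the desired perfect pairing and completes the theorem.
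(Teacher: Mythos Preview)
Your proposal is correct and follows essentially the same route as the paper: reduce to $\b K^{(1)}$ via Lemma~2.3, set up the algebraic duality $\c K^{(1)}\cong\hat{\c W}^{(1)}$ over $\bb Q[\Lambda]$ using the decomposition~(17), construct the dual basis $\{\epsilon_k^*\}$ by the conditions $\c D_j^{(1)*}\epsilon_k^*=0$ and $\epsilon_k^*(\epsilon_i)=\delta_{ik}$, and then specialize via $x_i\mapsto\gamma x_i$, $\Lambda\mapsto\gamma^{n+1}\lambda$ to land in $\c C_0^*$. Your final paragraph, verifying explicitly that the pairing descends to $H^n\times\b K$ and that the resulting matrix is invertible (indeed it is $I+O(\gamma)$, since $\hat\theta_1(x)\equiv 1\pmod\gamma$), spells out a step the paper leaves implicit in the single sentence ``Finally this implies $\b K$ is the dual space of $H^n(\Omega^\bullet_{\c C_0},\nabla(D))$.''
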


\section{Deformation Equation}

In this section we calculate the deformation differential equation for the given family of hyperkloosterman sums and describe then the Frobenius action on local solutions of this equation.  We first define $\bold{\Omega}$, a discretely valued extension of $\Omega_0$ containing a p-adic unit, say $\lambda$, which is transcendental over the subfield $\Omega_0.$
We extend coefficients to $\bold{\Omega}$ in the following.  Let $\frac{\partial}{\partial \lambda}$ denote a fixed derivation of $\bold{\Omega}$ trivial on $\Omega_0$ and satisfying $\frac{\partial}{\partial \lambda}(\lambda) = 1$.  Let $\lambda_0 \in \c O_0$.  Not surprisingly, the deformation equation turns out to be simpler working with $\mathbf{K}^{(1)}$ (based on the splitting function $\theta_1$) as opposed to working with $\mathbf{K}$ (based on the splitting function $\theta_{\infty}$).  We consider that case first.  Let $T^{(1)}_{\lambda_0,\lambda}$  be the operation ``multiplication by $\exp\gamma\frac{ \lambda- \lambda_0}{x_1x_2\cdots x_n}$" acting on $\bold{K}^{(1)}_{\lambda_0}$.  Then for $\text{ord} \, (\lambda - \lambda_0) > 0$ it follows that $T^{(1)}_{\lambda_0, \lambda}\xi^* \in \c C_0^*$ for all $\xi^* \in \c C_0^*$.  We define 
\[ D^{(1) *}_{i,\lambda} =  x_i \frac{\partial }{\partial x_i} + \gamma x_i - \frac{\gamma \lambda}{ x_1 x_2\cdots x_n} = \exp(\gamma H^{(1)}(\lambda, x)) \circ  x_i \frac{\partial }{\partial x_i} \circ \exp(-\gamma H^{(1)}(\lambda, x)). \]
Similarly, we define $D^{(1) *}_{i, \lambda_0} =  x_i \frac{\partial }{\partial x_i} + \gamma x_i - \frac{\gamma \lambda_0}{ x_1 x_2\cdots x_n} = \exp(\gamma H^{(1)}(\lambda_0, x)) \circ  x_i \frac{\partial }{\partial x_i} \circ \exp(-\gamma H^{(1)}(\lambda_0, x))$.  It follows then that 
\[ D^{(1)*}_{i, \lambda} \circ T^{(1)}_{\lambda_0,\lambda} = T^{(1)}_{\lambda_0,\lambda} \circ D^{(1)*}_{i, \lambda_0}, \]
so that
\begin{equation}
T^{(1)}_{\lambda_0,\lambda} : \bold{K}^{(1)}_{\lambda_0} \rightarrow \bold{K}^{(1)}_{\lambda}.  
\end{equation}

Since $T^{(1)}_{\lambda_0,\lambda}$ has an obvious inverse $(T^{(1)}_{\lambda_0,\lambda})^{-1} =  T^{(1)}_{\lambda,\lambda_0}$, the map (18) above is an isomorphism.  We see from the algebraic construction of the dual theory above that for any element $\lambda \in \c O_0$, $\lambda \neq 0$, the elements of $\c B^*_{\lambda} = \{ \epsilon^*_{i,\lambda}\}_{i=0}^n$ have coefficients of each $x^u$ $(u \in \bb Z^n)$, which are meromorphic functions in $\lambda$ with a possible pole at $\lambda = 0$ and otherwise analytic in the disk $ \text{ord} \, \lambda > -\frac{n+1}{p-1}$.  As such we may apply $\lambda \frac{\partial }{\partial \lambda}$ to any element of $\bold K$ or $\bold K^{(1)}$.  In particular, since the operator $D^{(1) *}_{\lambda}= \lambda \frac{\partial }{\partial \lambda} - \frac{\gamma\lambda}{x_1x_2\cdots x_n}$ commutes with each of the operators $\{D^{(1)*}_{j,\lambda}\}_{j=1}^n$, it follows that $D^{(1)*}_{\lambda}$ defines a map as follows
\[ D^{(1)*}_{\lambda}: \bold{K}^{(1)}_{\lambda} \rightarrow \bold{K}^{(1)}_{\lambda}. \]
Fix $\lambda_0 $ a unit in $\c O_0$.  Let $\c R_0$ be the ring of functions in $\lambda$ with 
\[ \c R_0 = \bigg\{\xi(\lambda) = \sum_{r \geq 0}\xi(r)\lambda^{r}\mid \xi(r) \in \c O_0,\ \text{ord} \, \xi(r) \geq r\frac{n+1}{p-1} \bigg\}. \]
Then the coefficient of $\gamma^{-w(u)}x^{-u}$ in $\epsilon^*_{j,\lambda}$ has the form $\lambda^{m^*(-u)}\xi_u$ where $\xi_u \in \c R_0$ and $m^*(v) = \min\{0,-v_1,...-v_n\}$ for all $v \in \bb Z^n$. 
Let $\c M_0$ be the field of meromorphic functions in the open disk $\text{ord} \, \lambda > -\frac{n+1}{p-1}$ with a pole at most only at $\lambda = 0$.  Then $\c M_0$ is contained in the field $\c M_{\lambda_0}$ of germs of meromorphic functions at $\lambda_0$. We extend then coefficients to $\c M_{\lambda_0}$ and write 
\[ T^{(1)}_{\lambda_0,\lambda}: \bold{K}^{(1)}_{\lambda_0} {\otimes} \c M_{\lambda_0} \rightarrow  \bold{K}^{(1)}_{\lambda} {\otimes} \c M_{\lambda_0}. \]
We view both $\mathbf{K}^{(1)}_{\lambda_0} \otimes \c M_{\lambda_0} $ and $\mathbf{K}^{(1)}_{\lambda} \otimes \c M_{\lambda_0} $ as modules over $\c M_{\lambda_0}$ with connections $\nabla^{(1)}_{\lambda_0}(\lambda\frac{d}{d \lambda})$ and $\nabla^{(1)}_{\lambda}(\lambda\frac{d}{d \lambda})$ defined as follows.  If $\alpha \otimes \xi \in \bold{K}^{(1)}_{\lambda_0} {\otimes} \c M_{\lambda_0}$ then $\nabla^{(1)}_{\lambda_0}(\lambda\frac{d}{d \lambda})(\alpha \otimes \xi) = \alpha \otimes \lambda \frac{d\xi}{d \lambda}$.  If $w \otimes \xi \in \bold{K}^{(1)}_{\lambda} {\otimes} \c M_{\lambda_0}$ then $\nabla^{(1)}_{\lambda}(\lambda\frac{d}{d \lambda})(w \otimes \xi) = D^{(1)*}_{\lambda}(w) \otimes \xi + w \otimes \lambda\frac{d(\xi)}{d \lambda}$.  Consider the commutative diagram
 \[
\xymatrix{
\bold{K}^{(1)}_{\lambda_0} {\otimes} \c M_{\lambda_0} \ar[r]\ar[d] & \bold{K}^{(1)}_{\lambda} {\otimes} \c M_{\lambda_0} \ar[d]\\
\bold{K}^{(1)}_{\lambda_0} {\otimes} \c M_{\lambda_0} \ar[r] & {\bold{K}^{(1)}_{\lambda}}{\otimes}\c M_{\lambda_0}}
\]
where the horizontal arrow on the top and bottom is in both cases the map $T^{(1)}_{\lambda_0, \lambda}$.  The vertical maps are (on the left) the connection map $\nabla^{(1)}_{\lambda_0}(\lambda \frac{\partial }{\partial \lambda})$ and on the right the connection map $\nabla^{(1)}_{\lambda}(\lambda \frac{\partial }{\partial \lambda})$. 

We fix a basis of ${\bold{K}^{(1)}_{\lambda}}$, say $\tilde{\c B}^{(1)*}_{\lambda} = \{ \tilde{\epsilon}^{(1)*}_{\lambda,k} \}_{k=0}^n$ as in section 2.  Let $\underline{\tilde{\epsilon}}^{(1)*}$ be the column vector $\underline{\tilde{\epsilon}}^{(1)*}= \text{transpose of } (\tilde{\epsilon}^{(1)*}_{\lambda,0},\tilde{\epsilon}^{(1)*}_{\lambda,1},\dots\tilde{\epsilon}^{(1)*}_{\lambda,n})$. Then it is an easy calculation to show that the connection then has the following explicit form 
\[ \nabla^{(1)}_{\lambda}(\underline{\tilde{\epsilon}}^{(1)*}) = -G^{(1)}   \underline{\tilde{\epsilon}}^{(1)*},  \]
where 
\[
G^{(1)} = 
\begin{bmatrix}
0 & 0 & 0 & \dots & 0 & \gamma^{n+1}\lambda\\
1 & 0 & 0 & \dots  & 0 & 0 \\
0 & 1 & 0 & \dots  & 0 & 0 \\                      
\hdotsfor{6} \\                                                   
0 & 0 & 0 & \dots & 1 & 0               
\end{bmatrix}.
\]

An element $\sum_{j=0}^n \c Y_j^{(1)}(\lambda)\tilde{\epsilon}^{(1)*}_{\lambda,j} \in \mathbf{K}^{(1)}_{\lambda} \otimes \c M_{\lambda_0}$ is a horizontal section for the connection $\nabla^{(1)*}_{\lambda}(\lambda\frac{\partial}{\partial \lambda})$ if and only if it is the image (under the horizontal  map at the top of the commutative square above) of an element of $\mathbf{K}^{(1)}_{\lambda_0} \otimes \c M_{\lambda_0}$ ``independent of $\lambda$" (i.e. killed by $\nabla^{(1)*}_{\lambda_0}(\lambda\frac{\partial}{\partial \lambda}$), the vertical map on the left.  In this case, the vector $\underline{\c Y}^{(1)}(\lambda) = (\c Y^{(1)}_0(\lambda),...\c Y^{(1)}_n(\lambda)) \in \c M_{\lambda_0}^{n+1}$ is a solution of the differential system 
\begin{equation}
\lambda\frac{\partial \underline{\c Y}^{(1)}}{\partial \lambda} = \underline{\c Y}^{(1)}G^{(1)}. 
\end{equation}
Let $\c S^{(1)}_{\lambda_0}$ denote the local solution space of (19) at $\lambda_0$.

If say $\eta^*(\lambda) = \sum_{j=M}^{\infty}c_j(\lambda - \lambda_0^p)^j$ is a germ of a meromorphic function in $\lambda$ at $\lambda_0^p$, then clearly $\eta^*(\lambda^p)$ is the germ of a meromorphic function in $\lambda$ at $\lambda_0$; we denote the image of $\eta^*$ under this map by ${\eta^*}^{\phi}$.  Thus 
\[ \phi: \c M_{\lambda_0^p}  \rightarrow  \c M_{\lambda_0}. \] 
The following diagram is commutative
 \[
\xymatrix{
\bold{K}^{(1)}_{\lambda_0^p} {\otimes} \c M_{\lambda_0^p} \ar[r]\ar[d] & \bold{K}^{(1)}_{\lambda^p} {\otimes} \c M_{\lambda_0} \ar[d]\\
\bold{K}^{(1)}_{\lambda_0} {\otimes} \c M_{\lambda_0} \ar[r] & {\bold{K}^{(1)}_{\lambda}}{\otimes}\c M_{\lambda_0}}
\]
Here the top arrow is the composition $\phi \circ T^{(1)}_{\lambda_0^p, \lambda}$ as follows
\[ \bold{K}^{(1)}_{\lambda_0^p} {\otimes} \c M_{\lambda_0^p} \rightarrow \bold{K}^{(1)}_{\lambda} {\otimes} \c M_{\lambda_0^p} \rightarrow \bold{K}^{(1)}_{\lambda^p} {\otimes} \c M_{\lambda_0}. \]
The second map $\phi$ acts on both factors.  The vertical arrows are $\alpha^{(1)*}_{\lambda_0} \otimes \phi$ on the left and $\alpha^{(1)*}_{\lambda} \otimes 1$ on the right.  The map on the bottom is $T^{(1)}_{\lambda_0, \lambda} \otimes 1$.  If $\underline{{\c Y}}^{(1)}(\lambda) =(\c Y^{(1)}_0(\lambda), \dots,\c Y^{(1)}_n(\lambda)) \in \c S^{(1)}_{\lambda_0^p}$ is a local meromorphic solution of the deformation equation at $\lambda_0^p$, then $\eta^* = \sum_{j=0}^n \c Y^{(1)}_j(\lambda) \tilde{\epsilon}^{(1)*}_{j,\lambda}$ belongs to $\mathbf{K}^{(1)}_{\lambda} \otimes \c M_{\lambda_0^p}$ and is the image of an element $\eta_0^* \in \mathbf{K}^{(1)}_{\lambda_0^p} \otimes \c M_{\lambda_0^p} $ which is independent of $\lambda$.  By the commutativity of the square above $\alpha^*_{\lambda}(\eta^{*\phi}) = T^{(1)}_{\lambda_0,\lambda}(\alpha^*_{\lambda_0}(\eta^*_0))$.  But then this is a horizontal section of the connection on $\mathbf{K}^{(1)}_{\lambda} \otimes \c M_{\lambda_0}$ since $\alpha^*_{\lambda_0}(\eta^*_0)$ is independent of $\lambda$.  In terms of solutions
\[ \c Y^{(1)}(\lambda^p) \c A^{(1)}(\lambda) \]
is an element of $\c S^{(1)}_{\lambda_0}$.  If $\mathbf{Y}^{(1)}_{\lambda_0^p}(\lambda)$ is a fundamental solution matrix at $\lambda_0^p$ and $\mathbf{Y}^{(1)}_{\lambda_0}(\lambda) $ is likewise a fundamental solution matrix at $\lambda_0,$ then we may write
\begin{equation}
\mathbf{Y}^{(1) \phi}_{\lambda_0^p}(\lambda)\c A^{(1)}(\lambda) = \mathbf{M} \mathbf{Y}^{(1)}_{\lambda_0}(\lambda). 
\end{equation}

Now suppose $\lambda_0$ is a Teichm{\"u}ller unit so that $\lambda_0 $ lies in $\c T$, the maximal unramified extension of $\Omega_1$ in $\bb C_p$, with $\sigma(\lambda_0) = \lambda_0^p$ where $\sigma$ is the Frobenius generator of $\text{Gal} (\c T/\Omega_1) $.  If we let $\sigma$ act on coefficients and $ \underline{\c Y}(\lambda) \in \c S_{\lambda_0}$, then $\underline{\c Y}^{(1) \sigma}(\lambda) \in \c S^{(1)}_{\lambda_0^p}$.  It follows then that the map 
\[ \kappa: \c S^{(1)}_{\lambda_0} \rightarrow \c S^{(1)}_{\lambda_0} \]
defined by $\kappa(\underline{\c Y}^{(1)}(\lambda)) = \underline{\c Y}^{(1) \sigma \phi}(\lambda) \c A^{(1)}(\lambda)$ is a $\sigma$-linear transformation of $\c S^{(1)}_{\lambda_0}$ to itself.  Thus we may rewrite (20) above, in this case, as 
\begin{equation}
\mathbf{Y}^{(1) \sigma \phi}(\lambda)\c A^{(1)}(\lambda) = \mathbf{M} \mathbf{Y}^{(1)}(\lambda).    
\end{equation}
with locally constant matrix $\mathbf{M}$.

Let us fix now the basis $\tilde{\c B} =\tilde{\c B}^{(1)}= \{ \tilde{\epsilon}_j \}_{j=0}^n$ of both $H^n(\Omega_{\c C_0}^\bullet,  \nabla^{(1)}(D^{(1)}))$ and $H^n(\Omega_{\c C_0}^\bullet, \nabla(D))$.  Let us fix as well $ \tilde{\c B}_{\lambda}^{(1) *}= \{ \tilde{\epsilon}^{(1) *}_{\lambda j} \}_{j=0}^n$ the dual basis of $\tilde{\c B}^{(1)}$ for $\mathbf{K}^{(1)}$ and $\tilde{\c B}_{\lambda}^*= \{ \tilde{\epsilon}^{ *}_{\lambda j} \}_{j=0}^n$ the dual basis of $\mathbf{K}_{\lambda}$ respectively. They are dual bases under the pairings described in the previous section.  For the purposes of simplifying the deformation equation on $\bf{K}_{\lambda}$, it is useful to use a``cyclic" vector basis for the dual cohomology.  Since $D_j = \hat{\theta}_{1}^{-1}(x) \circ D^{(1)}_j \circ \hat{\theta}_{1}(x)$ it follows immediately that multiplication by $(\theta_1(x))^{-1}$ is an isomorphism from $H^n(\Omega_{\c C_0}^\bullet, \nabla(\bar{D}^{(1)}))$ to  $H^n(\Omega_{\c C_0}^\bullet, \nabla(\bar{D}))$.  To this end we define a modification $\hat{\c B} = \{ \hat{\epsilon}_i \}_{i=0}^n$ of the basis $\tilde{\c B}$ defined by $\hat{\epsilon}_i =   \tilde{\epsilon}_i (\hat{\theta}_1(x))^{-1}$ for  $i=0,1,\dots,n$.   Then the dual basis  $ \hat{\c B}^* $ to $\hat{\c B}$  in $\bold{ K}$ is given by $\{\hat{\epsilon}^*_i = \tilde{\epsilon}^*_i \hat{\theta}_1(x)) \}_{i=0}^n$.  In fact, conjugating by the series $\hat{\theta}_1(x)$ defines an isomorphism of modules with connection as follows:
\[ \rho: \bigg({\bf K}^{(1)}_{\lambda}, \nabla^{(1)}\bigg(\lambda\frac{d}{d \lambda}\bigg)\bigg) \rightarrow \bigg({\bf K}_{\lambda}, \nabla\bigg(\lambda\frac{d}{d \lambda}\bigg)\bigg) \] 
defined by $ \rho(\xi^*) = \xi^* \hat{\theta}_1(x)$ and $\rho(D^{(1) *}_{\lambda}) = D_{\lambda}^* = \hat{\theta}_1(x) \circ D^{(1) *}_{\lambda} \circ (\hat{\theta}_1(x))^{-1}$.  By Theorem 2.2 above, $\rho$ is an isomorphism on $\c C_0^*$ and since 
\[ D_j^* = \hat{\theta}_1(x)) \circ D^{(1)*}_j \circ (\hat{\theta}_1(x))^{-1}, \]
$\rho$ sends ${\bf K}^{(1)}_{\lambda}$ to ${\bf K}_{\lambda}$.   Under $\rho$, the basis $\tilde{\c B}^*$ maps to $\hat{\c B}^*$,  the element $\hat{\epsilon}^*_0 (= \hat{\theta}_1(x))$ under $\nabla(\lambda\frac{d}{d \lambda})$ generates a basis of ${\bf K}_{\lambda}$, and the deformation equation with respect to this basis $\hat{\c B}^*$ on ${\bf K}_{\lambda}$ is also given by (20). We summarize the above discussion in the following result.
\begin{theorem}
Without any restriction on characteristic, the deformation equation calculated using the basis  $\tilde{\c B}_{\lambda}^*= \{ \tilde{\epsilon}^{ *}_{\lambda j} \}_{j=0}^n$ for the dual space $\bold{K}_{\lambda}$  is given by (19) above.
\end{theorem}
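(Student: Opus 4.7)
The plan is to deduce Theorem 3.1 by transporting the connection calculation already performed on $\mathbf{K}^{(1)}_\lambda$ over to $\mathbf{K}_\lambda$ via the isomorphism $\rho$ of Section~2. First I would verify (19) directly on $\mathbf{K}^{(1)}_\lambda$ in the basis $\tilde{\c B}^{(1)*}$: applying $D^{(1)*}_\lambda = \lambda\partial_\lambda - \gamma\lambda/(x_1\cdots x_n)$ to each basis element $\tilde\epsilon^{(1)*}_{\lambda,j}$ and re-expanding in the dual basis, the recursion on the coefficients $\epsilon^*_k(u)$ described in Section~2 (with the normalization $\epsilon^*_k(\epsilon_i) = \delta_{ik}$) forces $D^{(1)*}_\lambda\,\tilde\epsilon^{(1)*}_{\lambda,j} = -\tilde\epsilon^{(1)*}_{\lambda,j+1}$ for $0 \le j < n$ and $D^{(1)*}_\lambda\,\tilde\epsilon^{(1)*}_{\lambda,n} = -\gamma^{n+1}\lambda\,\tilde\epsilon^{(1)*}_{\lambda,0}$. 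This reproduces the companion-matrix form $-G^{(1)}$, so the horizontal section vector $\underline{\c Y}^{(1)}$ satisfies (19).

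Next I would invoke the map $\rho(\xi^*) = \xi^*\hat\theta_1(x)$ from Lemma~2.3, which is an isomorphism $\mathbf{K}^{(1)}_\lambda \to \mathbf{K}_\lambda$, and verify that it is in fact an isomorphism of modules with connection $(\lambda\,d/d\lambda)$. The relations $D^*_j = \hat\theta_1\circ D^{(1)*}_j\circ\hat\theta_1^{-1}$ established in Section~2, combined with the parallel identity $D^*_\lambda := \hat\theta_1\circ D^{(1)*}_\lambda\circ\hat\theta_1^{-1}$, reduce this to the straightforward statement that conjugation by $\hat\theta_1(x)$ intertwines the two operators; one checks it by applying the product rule to $\rho(D^{(1)*}_\lambda\xi^*)$ and to $D^*_\lambda(\rho(\xi^*))$ and using that $\hat\theta_1$ depends on $\lambda$ only through the argument $\lambda/(x_1\cdots x_n)$. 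In particular, $D^*_\lambda$ commutes with each $D^*_j$ because $D^{(1)*}_\lambda$ commutes with each $D^{(1)*}_j$, so $D^*_\lambda$ preserves $\mathbf{K}_\lambda$ and is the correct deformation operator there.

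Finally, under $\rho$, the basis $\tilde{\c B}^{(1)*}$ of $\mathbf{K}^{(1)}_\lambda$ maps to the basis $\{\tilde\epsilon^{(1)*}_{\lambda,j}\,\hat\theta_1(x)\}_{j=0}^n$ of $\mathbf{K}_\lambda$, which is the basis $\tilde{\c B}^*_\lambda$ in the statement (the preceding discussion identifies this basis via the cyclic vector $\hat\theta_1(x)$ generating $\mathbf{K}_\lambda$ under $\nabla(\lambda\,d/d\lambda)$). Because connection matrices are preserved by isomorphisms of modules with connection in compatible bases, the deformation equation on $\mathbf{K}_\lambda$ in $\tilde{\c B}^*_\lambda$ is again $\lambda\partial_\lambda\underline{\c Y} = \underline{\c Y}G^{(1)}$, namely (19). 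The only point requiring real care is the conjugation identity $D^*_\lambda = \hat\theta_1\circ D^{(1)*}_\lambda\circ\hat\theta_1^{-1}$ in the presence of the $\lambda$-derivative; the argument goes through for arbitrary characteristic because $\rho$ restricts to an isomorphism of $\c C_0^*$ for every $p$ by Theorem~2.2, so no restriction on $p$ is introduced.
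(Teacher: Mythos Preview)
Your approach mirrors the paper's own argument, which is the discussion immediately preceding the theorem statement: compute the connection matrix $-G^{(1)}$ on $\mathbf{K}^{(1)}_\lambda$ in the basis $\tilde{\c B}^{(1)*}_\lambda$, then transport to $\mathbf{K}_\lambda$ via the isomorphism $\rho(\xi^*)=\xi^*\hat\theta_1(x)$, checking that $\rho$ intertwines $D^{(1)*}_\lambda$ with $D^*_\lambda=\hat\theta_1\circ D^{(1)*}_\lambda\circ\hat\theta_1^{-1}$ so that the connection matrix is preserved.

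One point worth flagging concerns your identification of the transported basis. The image $\rho(\tilde{\c B}^{(1)*}_\lambda)=\{\tilde\epsilon^{(1)*}_{\lambda,j}\hat\theta_1(x)\}_{j=0}^n$ is what the paper names $\hat{\c B}^*$, the basis of $\mathbf{K}_\lambda$ dual to the \emph{modified} cohomology basis $\hat{\c B}=\{\tilde\epsilon_j\hat\theta_1^{-1}(x)\}$, not to $\tilde{\c B}=\{\tilde\epsilon_j\}$ itself. Since $\hat{\c B}\neq\tilde{\c B}$ in cohomology, one has $\hat{\c B}^*\neq\tilde{\c B}^*_\lambda$ in general, so your claim that the image ``is the basis $\tilde{\c B}^*_\lambda$ in the statement'' is not literally correct. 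The paper is itself loose on exactly this point---its discussion establishes (19) for $\hat{\c B}^*$ (the cyclic-vector basis you mention) while the theorem statement names $\tilde{\c B}^*_\lambda$---so your argument matches what the paper actually proves, and the gap is shared rather than introduced by you.
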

For the sake of completeness, we mention in the present context the symplectic structure and functional equation in 

Recall that $\gamma$ is a chosen zero having $\text{ord}_p = \frac{1}{p-1}$ of the series $\sum_{j=0}^{\infty} \frac{t^{p^j}}{p^j}$ which fixes the character $\Theta$ of $\mathbb{F}_p$. If  instead we take $-\gamma$ as the chosen zero of this same series, this amounts to using instead the dual character $\Theta^{*}$. 

Let $\bold{\Theta}$ be the $n+1$ by $n+1$ constant matrix defined by
\[
\bold{\Theta} = 
\begin{bmatrix}
0 & 0 & 0 & \dots & 0 & 1\\
0 & 0 & 0 & \dots  & -1 & 0 \\                      
\hdotsfor{6} \\                                                   
(-1)^n & 0 & 0 & \dots & 0 & 0               
\end{bmatrix}.
\]

Then for $\bold{Y}$ a local fundamental solution matrix of $(19)$ above (say at a point $\lambda_0$ with $|\lambda_0| \leq 1$  and a local solution matrix $\bold{Z}_{-\gamma}$ for $(19)_{-\gamma}$ the differential equation $(19)$ modified only by replacing the chosen uniformizer $\gamma$ by $-\gamma$ throughout, the product
\[
\bold{Y} \bold{\Theta} \bold{Z}_{-\gamma}^t = \theta
\] 
where $\theta$ is locally constant.  In the case $\lambda_0 = 0$ we may choose $\bold{Y} = \lambda^H P(\lambda)$ with
$P(0) =I, P(\lambda)$ holomorphic in $D(0, 1^{-}),$ and 
\[
H = 
\begin{bmatrix}
0 & 0 & 0 & \dots & 0 & 0\\
1 & 0 & 0 & \dots  & 0 & 0 \\                      
\hdotsfor{6} \\                                                   
0 & 0 & 0 & \dots & 1 & 0               
\end{bmatrix}.
\]
if we choose as well
\[
\bold{Z}_{-\gamma} = \lambda^H P_{-\gamma}(\lambda) (=: \bold{Y}_{-\gamma})
\]
then this relation simplifies to 
\[
\bold{Y} \bold{\Theta} \bold{Y}_{-\gamma}^t = \bold{\Theta}
\]
\cite[Proposition 4.2.8]{S1}.

For the functional equation, the references are  \cite[Section 1]{S2} and \cite[Example 3]{D3}. The latter work improves the result in the former, requiring only that $p$ and $n+1$ are relatively prime. The present work does not improve on this, but does make it easier to see the action of Frobenius on the solutions at $\infty$ since Frobenius converges here in the bigger disk $\text{ord} > -\frac{n+1}{p-1}$ as we systematically use the  splitting function $\theta_{\infty}(t)$ having a better radius of convergence.

\end{document}